\newcommand{\lj}{\color{red}}
\newcommand{\ky}{\color{blue}}
\newcommand{\wt}{\color{magenta}}
\newtheorem{thm}{Theorem}[section]
\newtheorem{cor}[thm]{Corollary}
\newtheorem{lem}[thm]{Lemma}
\theoremstyle{definition}
\theoremstyle{remark}
\newtheorem{rem}[thm]{Remark}
\theoremstyle{example}
\numberwithin{equation}{section}
\newcommand{\scr}[1]{\mathscr #1}
\newcommand{\eps}{\varepsilon}
\newcommand{\nn}{\nonumber}
 \def\d{\mathrm{d}}
\def\e{\scr E}
\def\bR{\scr R}
\def\cL{\mathcal L}
\def\cM{\mathcal M}
\def\cN{\mathcal N}
\def\bE{\mathbb E}
\def\bP{\mathbb P}
\def\bR{\mathbb R}
\def\bg{\begin}
\def\be{\bg{equation}}
\def\de{\end{equation}}
\def\edar{\end{eqnarray}}
\def\lb{\label}
\def\l{\left}\def\r{\right}
\def\eps{\epsilon}
\def\q{\quad}
\def\lan{\langle}\def\ran{\rangle}
\def\[{\l[} \def\]{\r]}
\def\({\l(} \def\){\r)}
\def\hat{\widehat}
\def\bar{\overline}
 \def\beqlb{\begin{eqnarray}}\def\eeqlb{\end{eqnarray}}
 \def\beqnn{\begin{eqnarray*}}\def\eeqnn{\end{eqnarray*}}
\def\d{{\mbox{\rm d}}}
\title{\bf  {Variational formulas for the exit time of Hunt processes generated by semi-Dirichlet
  forms}}
\author{Lu-Jing Huang\qquad Kyung-Youn Kim\qquad Yong-Hua Mao\qquad Tao Wang\thanks{Corresponding author:  wang\_tao@mail.bnu.edu.cn}}
\date{}
\begin{document}

%%------------------------------------------------------------
 \maketitle

%%------------------------------------------------------------

\bg{abstract}
Variational formulas for the Laplace transform of the exit time from an open set of a Hunt process generated by a regular lower bounded semi-Dirichlet form are established.
While for symmetric Markov processes, variational formulas are derived for the exponential moments of the exit time.
As applications, we provide some comparison theorems and quantitative relations of the exponential moments and Poincar\'e inequalities.

\end{abstract}

{\bf Keywords:} variational formula, semi-Dirichlet form, exit time, exponential moment, comparison theorem, Poincar\'e inequality.

{\bf Mathematics subject classification(2020):} 60J46, 60J45, 60J25

%{\bf Running head} Variational principles for the exit time of Hunt processes

%%%%%%%%%%%%%%%%%%%%%%%%%%%%%%%%%%%%%%%%%%%%%%%%%%%%%%%%%%%%%%%%%%%%
\section{Introduction}

The main concern of this paper is to study the variational formulas for the Laplace transform of the exit time of a Hunt process generated by a lower bounded semi-Dirichlet form and for the exponential moment of the exit time of a reversible ergodic Markov process.

The exit time is a fundamental notion in probability.
It plays an important role in the analysis of stochastic processes, especially in ergodic theory, potential theory and martingale theory (\cite{cmf05,IW89,KSK76}).
It arises naturally in various applications too, e.g. in statistical physics, mathematical finance and biology (see \cite{MOR14,Re01} and references therein).

There are classical results for distributions and moments of exit times of some Markov processes (see e.g. \cite{APP05,BS02,DLM17,KM99,KMM98,Rs88}).
The relationship between the exponential moments of the exit time and Poincar\'e inequalities is an important topic in ergodic theory (see e.g. \cite{CGZ13,Fr73,KAM11}).
When the explicit expression of the density of the exit time is not available, its Laplace transform might be very useful (see \cite{Fu80,MR92,Ma97} and the references therein).
For example, analytic expressions for the Laplace transform of the exit time of some special L\'evy processes can be found in e.g. \cite{AAP04,Di07,MP12}.

However, the study of the Laplace transform and exponential moments of the exit times of general Markov processes is more challenging especially for non-symmetric cases due to the lack of tools to deal with non-self-adjoint operators.

Variational formulas are powerful tools. E.g. they provide some nice lower bounds of the spectral gap of Markov processes (\cite{Ch96,CW97}), they are helpful in proof of the recurrence of Markov processes (\cite{AF02,BGL07,GL82,KM99}).

Note that the results cited above are restricted to symmetric Markov processes. Few related results for the non-symmetric case, we refer the reader to \cite{Pi88} for the Dirichlet eigenvalue of diffusions, and \cite{Do94,GL14,LMS19} for examples involving Dirichlet's equations and capacity.
Very recently, started from Poisson's equations, Huang, Kim and Mao extended the variational formulas for the mean exit time to non-symmetric Markov chains and diffusions. In the same work, the authors also obtained some new variational formulas for the Laplace transform and the exponential moments of the exit time (see \cite{HKM20+,HM18} for more details).
Their main idea is to establish some variational formulas for a non-symmetric Markov process relying on its dual process. Indeed, one needs to consider a pair of Poisson's equations together (see Remark \ref{rem-mr1} (1) for more details).

On the other hand, notice that in semi-Dirichlet form theory, a lower bounded semi-Dirichlet form generates a process and its dual process simultaneously.
Furthermore, their $\beta$-potentials, which are closely related to Poisson's equations, have a natural relation to the Dirichlet form and the process (see e.g. \cite[Chapter IV]{MR92} and \cite[Chapter 3]{O13}).
Therefore to extend the variational formulas for the exit time in \cite{HKM20+,HM18} to general Markov processes, the Hunt process generated by a lower bounded semi-Dirichlet form seems to be a good starting point.

Let $X:=(X_t)_{t\geq0}$ be the Hunt process generated by a regular lower bounded semi-Dirichlet form $(\e,\mathscr{F})$ on a locally compact separable metric measure space $(E,d,\mu)$ (see more details in Section \ref{mr1}). We assume that $\mu$ is a positive $\sigma$-finite measure with full support on $E$.
Denote by $\bE_x$ the corresponding expectation starting from $x$ and $\bE_\mu:=\int_E\bE_x\mu(\d x)$.
Let $L^2(E,\mu)$ be the space of square integrable measurable functions on $E$ with respect to $\mu$, furnished with
%%%LJ I drop "its" here
scalar product $\lan f,g\ran_2:=\int_E f(x)g(x)\mu(\d x)$ and the associated norm
$||f||_2:=\sqrt{\lan f,f\ran_2}$.
Define
\be\lb{d:ebeta}
\e_\beta(f,g)= \e(f,g)+\beta\lan f,g\ran_2\q \text{for }\beta>\beta_0,
\de
where $\beta_0$ is the constant in {\bf Assumption A} in Section 2 below.

For any open set $\Omega\subset E$, denote by $\tau_\Omega=\inf\{t\geq0:X_t\notin\Omega\}$
 the first exit time of process $X$ from $\Omega$. We define the function spaces
 $$
 \scr{F}^\Omega=\{f\in\scr{F}:\check{f}=0\ \text{q.e. on }\Omega^c\}
 $$
 and
	$$
	\cN_{\Omega,\delta}=\{f\in\mathscr{F}^\Omega:\mu(f):=\int_E f\d \mu=\delta\}\q \text{for }\delta=0,1,
	$$
where q.e. stands for quasi-everywhere, and $\check{f}$ is the quasi-continuous version of $f$ (see e.g. \cite[p.51]{O13} for more details).

The following is our first main result.
\begin{thm}\lb{main-lt}
	Let $X$ be the Hunt process generated by a regular lower bounded semi-Dirichlet form $(\e,\mathscr{F})$ and $\Omega\subset E$ an open set. Then for any $\beta>\beta_0$,
	\be\lb{vp-lt}
	\frac{\beta}{\bE_\mu[1-\exp(-\beta\tau_\Omega)]}=\inf_{f\in \cN_{\Omega,1}}\sup_{g\in \cN_{\Omega,0}}\e_\beta(f+g,f-g).
	\de
Additionally, if the semi-Dirichlet form $(\e^\Omega,\mathscr{F}^\Omega)$, defined by $\e^\Omega=\e$ on $\mathscr{F}^\Omega$, is transient and $\beta_0=0$, then
	\be\lb{vp-m}
	1/\bE_\mu[\tau_\Omega]=\inf_{f\in \cN_{\Omega,1}}\sup_{g\in \cN_{\Omega,0}}\e(f+g,f-g).
	\de
\end{thm}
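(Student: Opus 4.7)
My plan is to identify two Poisson-type identities for the $\beta$-resolvents of the killed form, and then reduce the variational problem to a one-line algebraic manipulation that exposes an explicit saddle point.

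First I would introduce the $\beta$-resolvent $G^\Omega_\beta$ of the killed semi-Dirichlet form $(\e^\Omega, \scr{F}^\Omega)$ and its dual $\hat G^\Omega_\beta$, and set $u_\beta := \beta G^\Omega_\beta 1_\Omega$ and $\hat u_\beta := \beta \hat G^\Omega_\beta 1_\Omega$. Standard semi-Dirichlet form theory (see \cite[Ch.~IV]{MR92}, \cite[Ch.~3]{O13}), together with the probabilistic representation of the $\beta$-resolvent, should give $u_\beta, \hat u_\beta \in \scr{F}^\Omega$ satisfying
\be \lb{poisson-step}
\e_\beta(u_\beta, \phi) \,=\, \beta\mu(\phi) \,=\, \e_\beta(\phi, \hat u_\beta), \qquad \forall\, \phi \in \scr{F}^\Omega,
\de
together with $u_\beta(x) = \bE_x[1-e^{-\beta\tau_\Omega}]$ q.e.\ on $\Omega$ and, by $\mu$-duality of the two resolvents, $\mu(u_\beta) = \mu(\hat u_\beta) = \bE_\mu[1-e^{-\beta\tau_\Omega}] =: M$.

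The algebraic core is then the identity
\be
\e_\beta\bigl(u - u_\beta/M,\ v - \hat u_\beta/M\bigr) \,=\, \e_\beta(u,v) - \beta/M,
\de
valid for any $u, v \in \scr{F}^\Omega$ with $\mu(u) = \mu(v) = 1$, obtained by expanding and applying the two equations in \eqref{poisson-step}. Setting $u = f+g$, $v = f-g$ with $f \in \cN_{\Omega,1}$ and $g \in \cN_{\Omega,0}$ this becomes $\e_\beta(f+g, f-g) = \beta/M + \e_\beta(f+g - u_\beta/M,\, f-g - \hat u_\beta/M)$. I then take as saddle-point candidate $f^* := (u_\beta + \hat u_\beta)/(2M) \in \cN_{\Omega,1}$ and $g^* := (u_\beta - \hat u_\beta)/(2M) \in \cN_{\Omega,0}$. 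When $f = f^*$, the two entries of the residual form become $\phi := g - g^*$ and $-\phi$, so $\e_\beta(f^*+g, f^*-g) = \beta/M - \e_\beta(\phi,\phi) \le \beta/M$ by the coercivity built into {\bf Assumption A}; hence $\inf_f \sup_g \le \beta/M$. When $g = g^*$, the two entries coincide as $\psi := f - f^*$, so $\e_\beta(f+g^*, f-g^*) = \beta/M + \e_\beta(\psi,\psi) \ge \beta/M$; hence $\inf_f \sup_g \ge \beta/M$. Combining the two bounds yields \eqref{vp-lt}.

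For \eqref{vp-m}, transience of $(\e^\Omega, \scr{F}^\Omega)$ together with $\beta_0 = 0$ lets me define $u_0 := G^\Omega 1_\Omega \in \scr{F}^\Omega$ and dually $\hat u_0$, with $\e(u_0, \phi) = \mu(\phi) = \e(\phi, \hat u_0)$ and $u_0(x) = \bE_x \tau_\Omega$ q.e.; the argument of the previous paragraph then runs verbatim at $\beta = 0$. The main obstacle is not the algebra, which is essentially a two-line expansion once the Poisson identities are available, but the semi-Dirichlet groundwork: rigorously deriving \eqref{poisson-step} and the quasi-continuous stochastic representation of $u_\beta$ inside $\scr{F}^\Omega$ (which requires care with the integrability of $1_\Omega$ and with quasi-continuous versions), and, for \eqref{vp-m}, using transience to justify the limit $\beta \downarrow 0$ so that $u_0$ genuinely lies in $\scr{F}^\Omega$ and equals $\bE_\cdot \tau_\Omega$.
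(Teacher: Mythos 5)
Your core saddle-point argument is correct and is essentially the paper's own: the paper proves a general version (its Theorem \ref{main-dp}) for an arbitrary datum $\xi\in L^2(\Omega,\mu|_\Omega)$, taking $u_\beta=R^\Omega_\beta\xi$ and the dual solution $\widetilde u_\beta=\widetilde R^\Omega_\beta\xi$, and its optimizers $\bar w_\beta=(w_\beta+\widetilde w_\beta)/2$, $\hat w_\beta=(w_\beta-\widetilde w_\beta)/2$ are exactly your $f^*$ and $g^*$; your ``algebraic identity'' is just a repackaging of the same expansion using $\e_\beta(u_\beta,\phi)=\langle\xi,\phi\rangle_2=\e_\beta(\phi,\widetilde u_\beta)$ and $\e_\beta(g_1,g_1)\ge 0$. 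The treatment of \eqref{vp-m} via transience, $\beta_0=0$ and $R^\Omega\mathbf 1=\bE_\cdot[\tau_\Omega]$ also matches the paper.

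The genuine gap is the case of a general open set $\Omega$ with $\mu(\Omega)=\infty$, which the theorem covers ($\mu$ is only $\sigma$-finite). There your starting object $u_\beta=\beta G^\Omega_\beta\mathbf 1_\Omega$ is not available in the $L^2$ framework ($\mathbf 1_\Omega\notin L^2(\Omega,\mu|_\Omega)$), the normalization $M=\bE_\mu[1-e^{-\beta\tau_\Omega}]$ may be infinite, and the whole Poisson-equation/duality step \eqref{poisson-step} breaks down; saying this ``requires care with the integrability of $\mathbf 1_\Omega$'' does not supply the missing mechanism. The paper resolves it by a separate exhaustion argument: it first proves \eqref{vp-lt} when $\mu(\Omega)<\infty$, then takes open sets $\Omega_n\uparrow\Omega$ with $\mu(\Omega_n)<\infty$, replaces $\cN_{\Omega,\delta}$ by the core $\cN'_{\Omega,\delta}$ of compactly supported functions (its Remark \ref{rem-mr1}(2), using that $\scr F^\Omega$ is the $\e_\beta$-closure of $C_0(\Omega)\cap\scr F$), gets the inequality ``$\le$'' from $\tau_{\Omega_m}\le\tau_\Omega$ and monotone convergence, and imports the reverse inequality from the argument of \cite[Theorem 3.5]{HKM20+}. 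Without some version of this limiting step (and the analogous one for \eqref{vp-m}), your proof only establishes the theorem for $\Omega$ of finite measure; note also that when $M=\infty$ one must separately show the inf-sup is $0$, which again does not follow from your identity.
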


\begin{rem}\lb{rem-mr1}
\begin{itemize}

\item[(1)]	Note that the above variational formulas for
the exit time are derived from Poisson's equations \eqref{poi-lt}--\eqref{poi-m} below.
    We point out that we only require the weak solutions of \eqref{poi-lt}--\eqref{poi-m}, which always exist in our setting. Indeed,  they are known as the potentials of $\Omega$ in the classical theory of Dirichlet form (see e.g. \cite{Fu80,O13}).
    Moreover,
    from the proof of Theorem \ref{main-lt} below, one can find that the associated Poisson's equations of its dual process also play an important role.
    Specifically, we construct functions from both of their solutions to attain the inf and sup in \eqref{vp-lt}--\eqref{vp-m}.
    %%%LJ I drop those sentences:  Fortunately, in semi-Dirichlet form theory, $\beta-$potentials of a lower bounded semi-Dirichlet form, which correspond to Poisson's equations of the process and its dual process, have been researched deeply. In particular,
    %they satisfy equality \eqref{solution} below. It is in the line with the idea that we could deal with the Poisson's equations of the process and its dual process together. Motivated by above analysis,  we consider the Hunt processes generated by a lower bounded semi-Dirichlet form and obtain Theorem \ref{main-lt}.

\item[(2)] In our setting,  $\scr{F}^\Omega$ is the $\e_{\beta}$-closure of $C_0(\Omega)\cap\scr{F}$ for all $\beta>\beta_0$ (see \cite[Theorem 3.5.7]{O13}), where $C_0(\Omega)$ is the space of continuous functions with compact support on $\Omega$. Thus we can replace $\cN_{\Omega,\delta}, \delta=0,1$ in \eqref{vp-lt}--\eqref{vp-m} with
    $$
    \cN'_{\Omega,\delta}:=\{f\in C_0(\Omega)\cap\scr{F}:\mu(f)=\delta\},\quad \delta=0,1.
    $$

\item[(3)]	As an application of Theorem \ref{main-lt}, in Subsection \ref{mr1-app} we consider jump diffusion $ X^{(k)}$ with a growing drift whose infinitesimal
    generator is given by
    $$
    \cL_k f(x):=\nabla\cdot a\nabla f(x)-kb\cdot\nabla f(x)+\Delta^{\alpha/2}f(x),\q k\in \bR,\ f\in C_0^\infty(\bR^d).
    $$
    From Theorem \ref{main-lt}, we obtain that the Laplace transform of the exit time of process $X^{(k)}$ and that of its dual process are same. Moreover, the Laplace transform of the exit time of the process, perturbed by a growing drift, is non-decreasing (see Theorem \ref{comp-thm1} below). That is, informally we could say that the exit times of a process and its dual process share the same distribution, and the growing drift reduces the mean time
     that the process exits
       from any open set $\Omega$,
       i.e., intuitively perturbing the growing drift to a process accelerates the convergence rate. Roughly speaking, for any measurable subset $A$,
       $\bP_\mu[X_t\in A, t<\tau_\Omega]\le \bP_\mu[t<\tau_\Omega]\le \bE_\mu[\tau_\Omega]/t$, which gives a faster rate when $\bE_\mu[\tau_\Omega]$ becomes smaller.

    \end{itemize}
\end{rem}

In particular, if the semi-Dirichlet form $(\e,\mathscr{F})$ is symmetric, i.e., $\e(f,g)=\e(g,f)$ for all $f,g\in\mathscr{F}$, the variational formulas in Theorem \ref{main-lt} are reduced to the following simple forms.

\begin{cor}\lb{revm-lt}
Let $X$ be the Hunt process generated by a regular lower bounded symmetric semi-Dirichlet form $(\e,\mathscr{F})$ and $\Omega\subset E$ an open set. Then for any $\beta>\beta_0$,
	 $$
	 \frac{\beta}{\bE_\mu[1-\exp(-\beta\tau_\Omega)]}=\inf_{f\in \cN_{\Omega,1}}\e_\beta(f,f).
	 $$
Additionally, if the
%%part
semi-Dirichlet form $(\e^\Omega, \mathscr{F}^\Omega)$ is transient and $\beta_0=0$, then
$$
1/\bE_\mu[\tau_\Omega]=\inf_{f\in \cN_{\Omega,1}}\e(f,f).
$$
\end{cor}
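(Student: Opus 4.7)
The plan is to deduce the corollary as a direct specialization of Theorem \ref{main-lt}, using symmetry to collapse the $\sup\inf$ structure into a single infimum. I would begin from the variational identity \eqref{vp-lt} and examine the inner quantity $\e_\beta(f+g, f-g)$ for $f \in \cN_{\Omega,1}$ and $g \in \cN_{\Omega,0}$. Expanding by bilinearity and using the symmetry hypothesis $\e(f,g) = \e(g,f)$ (which transfers to $\e_\beta$ since $\langle\cdot,\cdot\rangle_2$ is symmetric), one obtains
$$
\e_\beta(f+g, f-g) = \e_\beta(f,f) - \e_\beta(f,g) + \e_\beta(g,f) - \e_\beta(g,g) = \e_\beta(f,f) - \e_\beta(g,g),
$$
the cross terms cancelling.

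Next I would evaluate the supremum over $g \in \cN_{\Omega,0}$. Since $\beta > \beta_0$, the definition of $\beta_0$ as the lower bound of $(\e,\mathscr{F})$ gives $\e_\beta(g,g) \geq 0$ on all of $\mathscr{F}$, with equality at $g \equiv 0$. Because the zero function lies in $\mathscr{F}^\Omega$ and satisfies $\mu(0) = 0$, it belongs to $\cN_{\Omega,0}$, so
$$
\sup_{g \in \cN_{\Omega,0}} \e_\beta(f+g, f-g) = \e_\beta(f,f) - \inf_{g \in \cN_{\Omega,0}} \e_\beta(g,g) = \e_\beta(f,f).
$$
Taking the infimum over $f \in \cN_{\Omega,1}$ and combining with \eqref{vp-lt} yields the first identity.

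For the second assertion, the argument is identical in structure: starting from \eqref{vp-m}, the symmetry of $\e$ gives $\e(f+g, f-g) = \e(f,f) - \e(g,g)$, and the hypothesis $\beta_0 = 0$ together with the semi-Dirichlet form property ensures $\e(g,g) \geq 0$ on $\mathscr{F}^\Omega$ (the transience assumption is inherited directly from Theorem \ref{main-lt} and is not needed separately here). Choosing $g \equiv 0$ again attains the supremum, producing the claimed formula. I do not anticipate any genuine obstacle: the nontrivial content resides in Theorem \ref{main-lt}, and this corollary is a routine but clean consequence, with the only point to verify being the non-negativity of $\e_\beta$ for $\beta > \beta_0$, which is built into the notion of a lower bounded semi-Dirichlet form.
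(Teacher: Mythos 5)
Your proposal is correct and follows essentially the same route as the paper: the authors also collapse the inf--sup via the symmetric identity $\e_\beta(f+g,f-g)=\e_\beta(f,f)-\e_\beta(g,g)$ together with $\e_\beta(g,g)\ge 0$ for $\beta>\beta_0$ (this collapse is carried out inside the proof of Theorem \ref{main-dp}, and the corollary is then stated as immediate from Theorem \ref{main-lt} and symmetry). Your explicit remark that $g\equiv 0\in\cN_{\Omega,0}$ attains the supremum is the only detail the paper leaves implicit, and it is a valid completion of the same argument.
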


Our second main result involves the study of reversible ergodic Markov processes. Let $Y:=(Y_t)_{t\geq0}$ be a right-continuous strong Markov process on a polish space $(E,\mathscr{B}(E))$, with transition kernel $P_t(x,\d y),t\geq0,x,y\in E$.
Assume that process  $Y$ is ergodic, i.e., there exists a unique invariant probability measure $\pi$ such that
$$
\lim_{t\rightarrow\infty}||P_t(x,\cdot)-\pi(\cdot)\|_{\text{var}}=0,\q x\in E,
$$
where $||\nu||_{\text{var}}:=\sup_{|h|\leq 1}|\nu(h)|$ is the total variation norm of a signed measure $\nu$ on $\mathscr{B}(E)$.
We assume that $Y$ is reversible  with respect to $\pi$, that is,
$$
\pi(\d x)P_t(x,\d y)=\pi(\d y)P_t(y,\d x)\q \text{for all }t\geq0,\ x,y\in E.
$$

Let $(L,\scr{D}(L))$ be the infinitesimal generator of process $Y$ in $L^2(E,\pi)$ and denote the associated Dirichlet form by $(\e_\pi,\mathscr{D}(\e_\pi))$. We also assume that $(\e_\pi,\mathscr{D}(\e_\pi))$ is regular, i.e., $\scr{D}(\e_\pi)\cap C_0(E)$ is dense both in $\scr{D}(\e_\pi)$ and $C_0(E)$.
For any $\beta\in\bR$, define
$$
\e_{\pi,\beta}(f,g)=\e_\pi(f,g)+\beta\lan f,g\ran_{2,\pi},
$$
where $\lan\cdot,\cdot\ran_{2,\pi}$ is the usual product in $L^2(E,\pi)$.
To simplify the notation, we also denote the associated exit time by $\tau_\cdot$ and expectation by $\bE_\cdot$.
\begin{thm}\lb{main-exp}
Let $Y$ be a reversible ergodic right-continuous strong Markov process on $(E,\mathscr{B}(E))$ with regular Dirichlet form $(\e_\pi,\mathscr{D}(\e_\pi))$  and $\Omega\subset E$ an open set. Then for any $\beta>0$,
\be\lb{vp-exp}
\frac{\beta}{ \bE_\pi[\exp(\beta\tau_{\Omega})]-1}=\inf_{f\in\cN^\pi_{\Omega,1}}\e_{\pi,-\beta}(f,f) \vee 0,
\de
where
$\cN^\pi_{\Omega,1}:=\{f\in \scr{D}(\e_\pi):\ \check{f}=0\ \text{q.e. on $\Omega^c$ and }\pi(f)=1\}$.
\end{thm}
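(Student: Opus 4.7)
The plan is to reduce \eqref{vp-exp} to analysing a Poisson-type equation for the tilted bilinear form $\e_{\pi,-\beta}$ on $\scr{D}(\e_\pi)^\Omega:=\{f\in\scr{D}(\e_\pi):\check f=0\text{ q.e.\ on }\Omega^c\}$, mirroring the strategy behind Theorem \ref{main-lt} but with $-\beta<0$ in place of $\beta$. Using the elementary identity $e^{\beta t}-1=\beta\int_0^t e^{\beta s}\d s$, the left-hand side of \eqref{vp-exp} rewrites as
$$
\frac{\beta}{\bE_\pi[\exp(\beta\tau_\Omega)]-1}=\frac{1}{\pi(u_\beta)},\qquad u_\beta(x):=\bE_x\!\left[\int_0^{\tau_\Omega}e^{\beta s}\d s\right],
$$
whenever $\bE_\pi[\exp(\beta\tau_\Omega)]<\infty$, so the problem reduces to controlling $\pi(u_\beta)$ variationally. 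I would then split according to the Dirichlet principal eigenvalue $\lambda_0(\Omega):=\inf\{\e_\pi(g,g)/\pi(g^2):g\in\scr{D}(\e_\pi)^\Omega\setminus\{0\}\}$.

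In the regime $\beta<\lambda_0(\Omega)$, the inequality $\e_{\pi,-\beta}(h,h)\ge(1-\beta/\lambda_0(\Omega))\,\e_\pi(h,h)$ shows that $\e_{\pi,-\beta}$ is a coercive symmetric bilinear form on $\scr{D}(\e_\pi)^\Omega$, so Lax--Milgram yields a unique $u\in\scr{D}(\e_\pi)^\Omega$ with
$$
\e_{\pi,-\beta}(u,v)=\pi(v)\qquad\text{for all }v\in\scr{D}(\e_\pi)^\Omega.
$$
A resolvent-type argument (analogous to the $\beta$-potential identification invoked in Theorem \ref{main-lt}) identifies $u$ with $u_\beta$. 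Setting $f^\ast:=u/\pi(u)\in\cN^\pi_{\Omega,1}$ and testing against $v=u$ gives $\e_{\pi,-\beta}(f^\ast,f^\ast)=1/\pi(u)$, matching the LHS of \eqref{vp-exp}. For any other $f\in\cN^\pi_{\Omega,1}$, writing $f=f^\ast+h$ with $h\in\scr{D}(\e_\pi)^\Omega$, $\pi(h)=0$, the cross term $\e_{\pi,-\beta}(u,h)=\pi(h)$ vanishes, and coercivity yields
$$
\e_{\pi,-\beta}(f,f)=\frac{1}{\pi(u)}+\e_{\pi,-\beta}(h,h)\ge\frac{1}{\pi(u)},
$$
identifying $f^\ast$ as the minimiser.

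In the regime $\beta\ge\lambda_0(\Omega)$, standard Khasminskii-type estimates give $\bE_\pi[\exp(\beta\tau_\Omega)]=\infty$, so the LHS of \eqref{vp-exp} equals $0$ and it suffices to produce some $f\in\cN^\pi_{\Omega,1}$ with $\e_{\pi,-\beta}(f,f)\le 0$. By definition of $\lambda_0(\Omega)$, one picks $g\in\scr{D}(\e_\pi)^\Omega$ with $\e_{\pi,-\beta}(g,g)\le 0$; if $\pi(g)\neq 0$ one rescales $g$ directly, while if $\pi(g)=0$ one considers $f_0+tg$ for a fixed $f_0\in\cN^\pi_{\Omega,1}$ and uses that the quadratic $t\mapsto\e_{\pi,-\beta}(f_0+tg,f_0+tg)$ has non-positive leading coefficient, so becomes $\le 0$ for $|t|$ sufficiently large.

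The main obstacle I expect is the identification of the Lax--Milgram solution $u$ with the probabilistic object $u_\beta$ and the associated verification that $u_\beta\in\scr{D}(\e_\pi)^\Omega$ precisely when $\beta<\lambda_0(\Omega)$: since the spectral shift is by $-\beta<0$, neither the $\beta$-potential theory of \cite[Chapter IV]{MR92}, \cite[Chapter 3]{O13} nor the standard resolvent identity applies directly. I would circumvent this by exhausting $\Omega$ by relatively compact open sets $\Omega_n\uparrow\Omega$, using that $\lambda_0(\Omega_n)\ge\lambda_0(\Omega)>\beta$ so the classical theory applies on each $\Omega_n$, and then passing to the limit via monotone convergence $u_{\beta,n}\uparrow u_\beta$. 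The boundary case $\beta=\lambda_0(\Omega)$ in the second regime also requires a short continuity argument $\beta'\uparrow\lambda_0(\Omega)$.
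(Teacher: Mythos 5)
Your overall architecture is the same as the paper's: for $\beta<\lambda_0(\Omega)$ you characterize $v_\beta=(\bE_\cdot[\exp(\beta\tau_\Omega)]-1)/\beta$ as the weak solution of $(-\beta-L)u=1$ in $\Omega$, normalize it to obtain the minimiser, and get the lower bound from symmetry plus $\e_{\pi,-\beta}(h,h)\ge(\lambda_0(\Omega)-\beta)\pi(h^2)\ge0$ for mean-zero perturbations $h$; for $\beta\ge\lambda_0(\Omega)$ both sides are $0$ (your treatment of the case $\pi(g)=0$ there is in fact more careful than the paper's, which normalizes by $\pi(f_\epsilon)$ without comment). The genuine gap is precisely at the step you flag yourself: showing that $\bE_\cdot[\exp(\beta\tau_\Omega)]$ belongs to $\scr{D}(\e_\pi)$, vanishes q.e.\ off $\Omega$, and satisfies the weak equation for every $0<\beta<\lambda_0(\Omega)$. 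This is the content of the paper's Lemma \ref{int-exp}, proved by adapting the truncation scheme of \cite{KAM11}: one approximates $e^{\beta s}-1$ by smooth truncations $\rho_t$, uses the local Poincar\'e inequality to get the uniform bound $\|h_{\rho_t}\|_{2,\pi}\le C/(\lambda_0(\Omega)-\beta)$, and passes to the limit weakly with respect to $\e_{\pi,1}$.

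Your proposed circumvention does not close this gap. The assertion that ``the classical theory applies on each $\Omega_n$'' because $\lambda_0(\Omega_n)\ge\lambda_0(\Omega)>\beta$ is unfounded: the classical $\beta$-potential/resolvent theory invoked for Theorem \ref{main-lt} concerns the killing weight $e^{-\beta s}$ with $\beta>0$, and says nothing about the growing weight $e^{+\beta s}$; on a relatively compact $\Omega_n$ the identification problem is exactly the same as on $\Omega$, and relative compactness buys nothing (a Khasminskii-type bound would require $\beta\sup_x\bE_x[\tau_{\Omega_n}]<1$, which does not follow from $\beta<\lambda_0(\Omega_n)$, since $\sup_x\bE_x[\tau_{\Omega_n}]\ge 1/\lambda_0(\Omega_n)$). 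Your limiting scheme itself (uniform bounds such as $\pi(u_{\beta,n})\le(\lambda_0(\Omega)-\beta)^{-1}$, monotone convergence, weak compactness in $(\scr{D}(\e_\pi),\e_{\pi,1})$) could be made to work, but only once the base case is established, and that base case must be supplied either by the paper's truncation argument or, alternatively, by a spectral-theorem computation for the part (killed) semigroup, writing $v_\beta=\int_0^\infty e^{\beta s}P^\Omega_s{\bf 1}\,\d s$ and using that the spectrum of the killed generator lies in $(-\infty,-\lambda_0(\Omega)]$. Neither argument appears in your proposal, so as written the key lemma is assumed rather than proved.
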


\begin{rem}\lb{rem-kill}
It is well known that the exponential moments of the exit time are closed related to the local Poincar\'e inequality.
Specifically,
we say that process $Y$ possesses the local Poincar\'e inequality on $\Omega$ if
\begin{equation}\label{poinc-l}
\pi(f^2)\leq c_1^{-1} \e_\pi(f,f) \q \text{for all}\ f\in \scr{D}(\e_\pi)\ \text{with}\ \check{f}=0\ \text{q.e. on }\Omega^c
\end{equation}
for some $c_1>0$.
Denote by $\lambda_0(\Omega)$ its optimal constant, which is given by
\be\lb{di-ei}
\lambda_0(\Omega)=\inf\{\e_\pi(f,f):\ f\in\mathscr{D}(\e_\pi), \ \check{f}=0\ \text{q.e. on }\Omega^c\ \text{and }\pi(f^2)=1\}.
\de
Then
$$
\lambda_0(\Omega)=\sup\{\beta>0:\bE_\pi[\exp(\beta\tau_\Omega)]<\infty\},
$$
we refer to \cite{Fr73,LLL14} for more details.
As we shall see,  when $\beta<\lambda_0(\Omega)$, \eqref{vp-exp} is reduced to
$$
\frac{\beta}{\bE_\pi[\exp(\beta\tau_{\Omega})]-1}=\inf_{f\in\cN^\pi_{\Omega,1}}\e_{\pi,-\beta}(f,f).
$$

%We say that process $Y$ possesses the local Poincar\'e inequality on $\Omega$ if
%\begin{equation}\label{poinc-l}
%\pi(f^2)\leq c_1^{-1} \e_\pi(f,f), \q \text{for all}\ f\in \scr{D}(\e_\pi)\ \text{with}\ \check{f}=0\ \text{q.e. on }\Omega^c.
%\end{equation}
%for some $c_1>0$. Let $\lambda_0(\Omega)$ be its optimal constant, which is given by
%\be\lb{di-ei}
%\lambda_0(\Omega)=\inf\{\e_\pi(f,f):\ f\in\mathscr{D}(\e_\pi), \ \check{f}=0\ \text{q.e. on }\Omega^c\ \text{and }\pi(f^2)=1\}.
%\de
%Since $\scr{D}(\scr{E}_\pi^\Omega):=\{f\in\mathscr{D}(\e_\pi), \check{f}=0\ \text{q.e. on }\Omega^c\}$ is the $\e_{\pi,1}$-closure of $C_0(\Omega)\cap\scr{D}(\scr{E}_\pi),$ we have
%$ \lambda_0(\Omega)=\inf\{\e_\pi(f,f):\ f\in\mathscr{D}(\e_\pi), \ f=0\ \pi\text{-a.e. on }\Omega^c\ \text{and }\pi(f^2)=1\}.$
%It is well known that the exponential moments of the exit time are closed related to the local Poincar\'e inequality.
%Indeed,
%$$
%\lambda_0(\Omega)=\sup\{\beta>0:\bE_\pi[\exp(\beta\tau_\Omega)]<\infty\},
%$$
%we refer to \cite{Fr73,LLL14} for more details.
%As we shall see,  when $\beta<\lambda_0(\Omega)$, \eqref{vp-exp} is reduced to
%$$
%\frac{\beta}{\bE_\pi[\exp(\beta\tau_{\Omega})]-1}=\inf_{f\in\cN^\pi_{\Omega,1}}\e_{\pi,-\beta}(f,f).
%$$
\end{rem}

Set
\be\lb{pi-c}
\lambda_1=\inf\{\e_\pi(f,f):\ f\in \scr{D}(\e_\pi),\pi(f)=0\ \text{and }\pi(f^2)=1\},
\de
 which is the optimal constant of Poincar\'e inequality(see e.g. \cite[Example 1.1.2]{wfy05}).
Now as an application of Theorem \ref{main-exp}, some quantitative inequalities of the exponential moments of the exit time, $\lambda_0(\Omega)$ and $\lambda_1$ are presented as follows.

\begin{cor}\lb{up-low-b}
Let $Y$ be the process  as in Theorem \ref{main-exp}.
Let $ \Omega\subset E$ be an open set and assume that $\lambda_0(\Omega)>0$.  Then the following statements hold.
\begin{itemize}
\item[\rm(1)] {\bf(Upper bounds for exponential moments)} For any $\beta\in (0, \lambda_0(\Omega))$,
\be\lb{rep-expint}
\bE_\pi[\exp(\beta\tau_{\Omega})]\leq 1+\frac{\beta}{\lambda_0(\Omega)-\beta}.
\de
 In particular, if  $\lambda_1>0$ and $\pi(\Omega^c)>0$, then for any $\beta\in (0, \lambda_1\pi(\Omega^c))$,
\be\lb{exp-lambda1}
\bE_\pi[\exp(\beta\tau_{\Omega})]\leq 1+\frac{\beta}{\lambda_1\pi(\Omega^c)-\beta}.
\de

\item[\rm(2)] {\bf (Lower bounds for exponential moments)} Additionally, if
\be\lb{dirichlet-e}
 \begin{cases}
 (-\lambda_0(\Omega)-L)u=0,&\ \text{in }\Omega;\\
 u=0,&\ \text{q.e. on }\Omega^c
 \end{cases}
 \de
has a weak solution $\phi\in\scr{D}(\e_\pi)$, i.e., $\phi=0$ q.e. on $\Omega^c$ and
$$
\e_{\pi,-\lambda_0(\Omega)}(\phi,f)=0\q \text{for all }f\in\scr{D}(\e_\pi)\ \text{with }\check{f}=0\ \text{q.e. }\Omega^c,
$$
then for any $\beta\in (0, \lambda_0(\Omega))$,
\be\lb{expint-lb}
    \bE_\pi[\exp(\beta\tau_\Omega)]\geq 1+\frac{\beta\pi(\phi)^2}{(\lambda_0(\Omega)-\beta)\pi(\phi^2)}.
\de
\end{itemize}
\end{cor}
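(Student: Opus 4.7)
The plan is to read both estimates directly off the variational identity supplied by Theorem \ref{main-exp}. By Remark \ref{rem-kill}, for any $\beta\in(0,\lambda_0(\Omega))$ the local Poincar\'e inequality \eqref{poinc-l} forces $\e_{\pi,-\beta}(f,f)>0$ on $\cN^\pi_{\Omega,1}$, so the ``$\vee 0$'' in \eqref{vp-exp} is inactive and
$$
\frac{\beta}{\bE_\pi[\exp(\beta\tau_\Omega)]-1}=\inf_{f\in\cN^\pi_{\Omega,1}}\e_{\pi,-\beta}(f,f).
$$
Consequently, a uniform lower bound on $\e_{\pi,-\beta}(f,f)$ over $\cN^\pi_{\Omega,1}$ produces an upper bound on the exponential moment, while a single admissible test function produces a lower bound.

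For \eqref{rep-expint} in part~(1), the definition \eqref{di-ei} of $\lambda_0(\Omega)$ gives $\e_\pi(f,f)\geq \lambda_0(\Omega)\pi(f^2)$ for every $f$ with $\check f=0$ q.e.\ on $\Omega^c$. Combining this with Cauchy--Schwarz against the probability measure $\pi$, which gives $\pi(f^2)\geq \pi(f)^2=1$ on $\cN^\pi_{\Omega,1}$, yields
$$
\e_{\pi,-\beta}(f,f) \geq (\lambda_0(\Omega)-\beta)\pi(f^2) \geq \lambda_0(\Omega)-\beta,
$$
and \eqref{rep-expint} follows after rearrangement. For the sharper form \eqref{exp-lambda1}, it suffices to prove $\lambda_0(\Omega)\geq \lambda_1\pi(\Omega^c)$ and reapply \eqref{rep-expint}. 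To establish this inequality, I would take any $f\in\scr{D}(\e_\pi)$ with $\check f=0$ q.e.\ on $\Omega^c$ and $\pi(f^2)=1$, apply the global Poincar\'e inequality \eqref{pi-c} to $f-\pi(f)$, and bound $\pi(f)^2\leq \pi(\Omega)\pi(f^2)=\pi(\Omega)$ via Cauchy--Schwarz (using that $f$ vanishes on $\Omega^c$):
$$
\e_\pi(f,f) \geq \lambda_1\bigl(\pi(f^2)-\pi(f)^2\bigr) \geq \lambda_1\bigl(1-\pi(\Omega)\bigr) = \lambda_1\pi(\Omega^c).
$$

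For part~(2) the natural test function is $f:=\phi/\pi(\phi)$, assumed nontrivial (if $\pi(\phi)=0$, \eqref{expint-lb} reduces to the tautology $\bE_\pi[\exp(\beta\tau_\Omega)]\geq 1$). Then $f\in\cN^\pi_{\Omega,1}$, and substituting $g=\phi$ into the weak formulation of \eqref{dirichlet-e}, which is legitimate because $\check\phi=0$ q.e.\ on $\Omega^c$, gives $\e_\pi(\phi,\phi)=\lambda_0(\Omega)\pi(\phi^2)$; hence
$$
\e_{\pi,-\beta}(f,f) = \frac{(\lambda_0(\Omega)-\beta)\pi(\phi^2)}{\pi(\phi)^2}.
$$
Inserting this single $f$ into the variational formula above yields an upper bound on $\beta/(\bE_\pi[\exp(\beta\tau_\Omega)]-1)$ that rearranges to \eqref{expint-lb}. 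No serious obstacle is expected; the only care needed is tracking the signs so that $\beta<\lambda_0(\Omega)$ makes both the local Poincar\'e gap $\lambda_0(\Omega)-\beta$ and the denominator $\bE_\pi[\exp(\beta\tau_\Omega)]-1$ strictly positive, which keeps the manipulations above rigorous.
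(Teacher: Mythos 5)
Your proposal is correct and follows essentially the same route as the paper: the uniform bound $\e_{\pi,-\beta}(f,f)\geq(\lambda_0(\Omega)-\beta)\pi(f^2)\geq\lambda_0(\Omega)-\beta$ over $\cN^\pi_{\Omega,1}$ gives \eqref{rep-expint}, and plugging the single test function $\phi/\pi(\phi)$ (with $\e_\pi(\phi,\phi)=\lambda_0(\Omega)\pi(\phi^2)$, the case $\pi(\phi)=0$ being trivial) into Theorem \ref{main-exp} gives \eqref{expint-lb}. The only deviation is that you prove $\lambda_0(\Omega)\geq\lambda_1\pi(\Omega^c)$ directly via the global Poincar\'e inequality applied to $f-\pi(f)$ together with $\pi(f)^2\leq\pi(\Omega)\pi(f^2)$, whereas the paper simply cites this comparison as \eqref{poin-lpoin}; your short self-contained argument is valid.
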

\begin{rem}
\begin{itemize}

\item[(1)] We mention that the existence assumption for solution of \eqref{dirichlet-e} is not too strong. In fact, many cases satisfy this condition. For example, it follows from \cite[Theorem 3.5.5]{Pi95} that \eqref{dirichlet-e} of diffusions with some regularity conditions has a unique positive strong solution.

\item[(2)] The similar bounds for the Laplace transform of the exit time and the mean exit time are also obtained from Theorem \ref{main-lt}, which will be presented in Subsection \ref{exp-poinc}.
\end{itemize}
\end{rem}

The remaining part of this paper is organized as follows.
In Subsection \ref{mr1-proof} the proofs of Theorem \ref{main-lt} and Corollary \ref{revm-lt} are presented. We introduce diffusions with $\alpha$-stable jumps in order to illustrate the application of Theorem \ref{main-lt} in Subsection \ref{mr1-app}.
Subsection \ref{proof-mr2} is devoted to the proof of Theorem \ref{main-exp}, while Subsection \ref{exp-poinc} provides the proof of Corollary \ref{up-low-b} and some bounds of the Laplace transform of the exit time and the mean exit time. Finally, a reversible ergodic diffusion on manifold is provided as an example in Subsection \ref{exm-sym}.

%%%%%%%%%%%%%%%%%%%%%%%%%%%%%%%%%%%%%%%%%%%%%%%%%%%%%%%%%%%%%%%%%%%%%%%%%%%%%%%%%%%%%%%
\section{The exit time of Hunt processes}\lb{mr1}

The main objective of this section is to prove Theorem \ref{main-lt} and give an example to illustrate its application. For that, we present a few definitions that will be used later.

Recall that $(E, d, \mu)$ is the locally compact separable metric measure space equipped with metric $d$ and positive $\sigma$-finite
measure $\mu$ with full support.
Let $\mathscr{F}$ be a dense subspace of $L^2(E,\mu)$. A bilinear form $\e$ defined on $\mathscr{F}\times \mathscr{F}$ is called {\it a lower bounded semi-Dirichlet form} if the following {\bf Assumption A} is satisfied: there exists a constant $\beta_0\geq0$ such that
\begin{itemize}
\item[(1)] (lower boundedness) for any $f\in\mathscr{F}$, $\e_{\beta_0}(f,f)\geq0$, where $\e_\beta(f,g):=\e(f,g)+\beta\langle f,g\rangle_2$.

\item[(2)] (weak sector condition) there exists a constant $C\geq1$ such that
$$
|\e(f,g)|\leq C\sqrt{\e_{\beta_0}(f,f)}\sqrt{\e_{\beta_0}(g,g)}\q \text{for all }f,g\in \mathscr{F}.
$$
\item[(3)] (closedness)
 $\mathscr{F}$ is a Hilbert space relative to the inner product
 $$
 \frac{1}{2}\big(\e_\beta(f,g)+\e_\beta(g,f)\big)\quad \text{for all }\beta>\beta_0.
 $$
\item[(4)] (Markov property) $f^+\wedge1\in \mathscr{F}$ whenever $f\in \mathscr{F}$, and $\e(f^+\wedge1,f-f^+\wedge1)\geq0$.
\end{itemize}
We say that a lower bounded semi-Dirichlet form $(\e,\mathscr{F})$ is regular if $\mathscr{F}\cap C_0(E)$ is uniformly dense in $C_0(E)$ and $\e_\beta$-dense in $\mathscr{F}$ for $\beta>\beta_0$.

It is well known that there exists a Hunt process $X:=(X_t)_{t\geq0}$ associated with a regular lower bounded semi-Dirichlet form $(\e,\mathscr{F})$ (see e.g. \cite[Theorem 3.3.4]{O13}).
Corresponding to process $X$, there exists a unique semigroup  $(P_t)_{t\geq0}$ on $L^2(E,\mu)$.
Denote the associated resolvent by $R_\beta=\int_0^\infty e^{-\beta t}P_t dt$, $\beta>0$.
From \cite[Theorem 1.1.2]{O13}, we
 see that there also exists a strongly continuous semigroup $(\widetilde{P}_t)_{t\geq0}$ such that
$$
\lan P_t f,g\ran_2=\lan f, \widetilde{P}_t g\ran_2, \q t\geq0,\ f,g\in L^2(E,\mu).
$$
That is, $\widetilde{P}_t$ is the dual operator of $P_t$ with respect to $\mu$. Furthermore, let $\widetilde{R}_\beta=\int_0^\infty e^{-\beta t}\widetilde{P}_t dt,\ \beta>0$ be the resolvent of semigroup $(\widetilde{P}_t)_{t\geq0}$, which is also called the dual resolvent of $(R_\beta)_{\beta>0}$. Then for $\beta>\beta_0,\ f\in L^2(E,\mu)$ and $h\in\mathscr{F}$,
$$
\e_\beta(R_\beta f,h)=\e_\beta(h,\widetilde{R}_\beta f)=\lan f,h\ran_2.
$$
%and the adjoint operator $(\widetilde P_t)_{t\geq0}$ is defined as
%$\lan P_t f,g\ran_2=\lan f,\widetilde{P}_t g\ran_2$. For $\beta>\beta_0$, denote the associated resolvents by
%$$
%R_\beta:=\int_0^\infty e^{-\beta t}P_t dt\q\text{and}\q \widetilde{R}_\beta:=\int_0^\infty e^{-\beta t}\widetilde{P}_t dt,
%$$
%which satisfy that
%$$
%\e_\beta(R_\beta f,h)=\e_\beta(h,\widetilde{R}_\beta f)=\lan f,h\ran_2
%$$
%for $f\in L^2(E,\mu),\ h\in\mathscr{F}$.
%We also call that $\widetilde{P}_t, \widetilde{R}_\beta$ are the dual operators of $P_t$ and $R_t$ in $L^2(E,\mu)$ respectively.
In addition, (A4) in {\bf Assumption A} yields that $(R_\beta)_{\beta>0}$ can be extended to  a sub-Markov resolvent on $L^\infty(X, \mu)$.
Now define the potential operator
\be\lb{ivs}
Rf=\lim_{n\rightarrow\infty}R_{1/n}f,\q f\in L^\infty(X, \mu).
\de
We say that the semi-Dirichlet form $(\e,\mathscr{F})$ is transient if there exists a strictly positive function $f\in L^\infty(E,\mu)$ such that $Rf<\infty, \mu$-a.e. (see, e.g. \cite[p.13]{O13}).

%Fix a domain $\Omega\subset E$ and function $\xi\in L^2(E,\mu)$ with $\xi|_{\Omega^c}=0$.
%From \cite[(4.1.1) and Example 4.1.3]{O13} we could see that
%\be\lb{rsv-2}
%R_\beta \xi(x)=R_\beta (\xi1_\Omega)(x)=\int_0^\infty \exp(-\beta t)P_t\xi(x)\d t,\q \beta>\beta_0.
%\de

\subsection{Proof of Theorem \ref{main-lt}}\lb{mr1-proof}
Recall that $X=(X_t)_{t\geq0}$ is the process generated by a regular lower bounded semi-Dirichlet form $(\e,\mathscr{F})$. Denote $(P_t)_{t\geq0}$ and $(R_\beta)_{\beta\geq0}$ by its associated transition semigroup and resolvent respectively. Let $(\cL,\scr{D}(\cL))$ be its infinitesimal generator in $L^2(E,\mu)$, that is,
$$
\aligned
\scr{D}(\cL)&:=\bigg\{f\in L^2(E,\mu) : \lim_{t\to 0}\frac{P_t f-f}{t} \ \text{exists in $L^2(E,\mu)$}\bigg\},\\
\cL f(x) &:=\lim_{t\to 0}\frac{P_t f(x) -f(x)}{t}.
\endaligned
$$

%As we mentioned before, we will derive the variational formulas \eqref{vp-lt}--\eqref{vp-m} from Poisson's equations.
To prove Theorem \ref{main-lt}, we introduce a class of Poisson's equations as follows.
Fix an open set $\Omega\subset E$, a function $\xi\in L^2(\Omega,\mu|_\Omega)$
and $\beta>\beta_0$, where $\beta_0$ is the constant in {\bf Assumption A}.
Consider the Poisson's equation corresponding to $\cL$:
\be\lb{poi}
\begin{cases}
(\beta-\cL)u=\xi ,&\q \text{in }\Omega;\\
u=0,&\q \text{q.e. on }\Omega^c.
\end{cases}
\de
The function $u_\beta\in \mathscr{F}^\Omega$ is called a weak solution of \eqref{poi} if
$$
\e_\beta(u_\beta, f)=\lan \xi,f\ran_2:=\int_\Omega\xi f\d\mu\q \text{for all }f\in\mathscr{F}^\Omega.
$$
In fact, from the proof of \cite[Theorem 3.5.7]{O13}, one could find that
$$
R^\Omega_\beta\xi(x):=\bE_x\int_0^{\tau_\Omega}\text{e}^{-\beta t}\xi(X_t)\d t,\q x\in E
$$
is the unique weak solution of \eqref{poi}.
Furthermore,
\be\lb{solution}
\e_\beta(R_\beta^\Omega \xi,f)=\e_\beta(f,\widetilde{R}^\Omega_\beta \xi)=\lan\xi,f\ran_2\q\text{for all } f\in \mathscr{F}^\Omega,
\de
where $\widetilde{R}^\Omega_\beta$ is the dual resolvent of $R^\Omega_\beta$.
%and similarly it is defined using the corresponding exit time $\widetilde{\tau}_\Omega$.

Denote two function spaces $\cM_{\Omega,\delta}$ by
$$
\cM_{\Omega,\delta}=\{f\in\mathscr{F}^\Omega:\lan\xi,f\ran_2=\delta\},\q \delta=0,1.
$$
For \eqref{poi}, we obtain the following variational formulas.

\begin{thm}\lb{main-dp}
Let $X$ be the process as in Theorem \ref{main-lt} and  $\Omega\subset E$  an open set, $\beta>\beta_0$ and function $\xi\in L^2(\Omega,\mu|_\Omega)$.
Denote by $u_\beta=R_\beta^\Omega\xi \in \mathscr{F}^\Omega$ the  unique weak solution of  \eqref{poi}.
Then
	\be\lb{xi-vp}
	1/\lan\xi,u_\beta\ran_2= \inf_{f\in \cM_{\Omega,1}}\sup_{g\in { \cM_{\Omega,0}}}\e_\beta(f+g,f-g).
	\de
In particular, if $\e$ is symmetric, then
	\be\lb{revxi-vp}
	1/\lan\xi,u_\beta\ran_2= \inf_{f\in \cM_{\Omega,1}}\e_\beta(f,f).
	\de
\end{thm}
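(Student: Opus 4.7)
The plan is to exploit the duality between the Poisson equation and its adjoint and to use a carefully designed symmetric/antisymmetric decomposition of the primal and dual solutions. Concretely, I would introduce the dual weak solution $v_\beta := \widetilde{R}^\Omega_\beta \xi \in \mathscr{F}^\Omega$, which by \eqref{solution} satisfies $\e_\beta(\phi, v_\beta) = \langle \xi, \phi\rangle_2$ for every $\phi \in \mathscr{F}^\Omega$, dual to the relation $\e_\beta(u_\beta, \phi) = \langle \xi, \phi\rangle_2$. Writing $c := \langle \xi, u_\beta\rangle_2$ and testing the two identities against each other yields $\e_\beta(u_\beta, v_\beta) = \langle \xi, v_\beta\rangle_2 = \langle \xi, u_\beta\rangle_2 = c$. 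This suggests the candidate extremizers $f^* := (u_\beta + v_\beta)/(2c) \in \cM_{\Omega,1}$ and $g^* := (u_\beta - v_\beta)/(2c) \in \cM_{\Omega,0}$, engineered so that $f^* + g^* = u_\beta/c$ and $f^* - g^* = v_\beta/c$, and hence $\e_\beta(f^* + g^*, f^* - g^*) = \e_\beta(u_\beta, v_\beta)/c^2 = 1/c$. Identifying this pair is the conceptual heart of the argument; the rest is bilinear algebra combined with the coercivity of $\e_\beta$.

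For the upper bound $\inf_f \sup_g \le 1/c$, I would fix $f = f^*$ and parametrize an arbitrary $g \in \cM_{\Omega, 0}$ as $g = g^* + h$ with $h \in \cM_{\Omega, 0}$, so that $f^* + g = u_\beta/c + h$ and $f^* - g = v_\beta/c - h$. Expanding bilinearly,
\[
\e_\beta(f^* + g, f^* - g) = \tfrac{1}{c^2}\e_\beta(u_\beta, v_\beta) - \tfrac{1}{c}\e_\beta(u_\beta, h) + \tfrac{1}{c}\e_\beta(h, v_\beta) - \e_\beta(h, h).
\]
The two middle cross terms both reduce to $\pm \langle \xi, h\rangle_2/c$ and vanish because $h \in \cM_{\Omega,0}$, leaving $1/c - \e_\beta(h,h) \le 1/c$ by the coercivity $\e_\beta(h,h) \ge (\beta - \beta_0)\|h\|_2^2 \ge 0$ guaranteed by Assumption A and $\beta > \beta_0$. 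Consequently $\sup_g \e_\beta(f^* + g, f^* - g) = 1/c$, attained at $h = 0$.

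For the matching lower bound, given any $f \in \cM_{\Omega, 1}$ I would write $f = f^* + \tilde f$ with $\tilde f \in \cM_{\Omega, 0}$ and make the specific choice $g := g^* + \tilde f \in \cM_{\Omega, 0}$. A direct computation then gives $f - g = f^* - g^* = v_\beta/c$ (independently of $\tilde f$) and $f + g = u_\beta/c + 2\tilde f$, so
\[
\e_\beta(f+g, f-g) = \tfrac{1}{c^2}\e_\beta(u_\beta, v_\beta) + \tfrac{2}{c}\e_\beta(\tilde f, v_\beta) = \tfrac{1}{c} + \tfrac{2}{c}\langle \xi, \tilde f\rangle_2 = \tfrac{1}{c}.
\]
This forces $\sup_g \ge 1/c$ for every $f$, completing \eqref{xi-vp}. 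For the symmetric case, $u_\beta = v_\beta$ forces $g^* = 0$; writing $f = u_\beta/c + \tilde f$ with $\tilde f \in \cM_{\Omega, 0}$, symmetry together with $\langle \xi, \tilde f\rangle_2 = 0$ kills the cross term in the expansion of $\e_\beta(f,f)$, yielding $\e_\beta(f,f) = 1/c + \e_\beta(\tilde f, \tilde f) \ge 1/c$ with equality at $\tilde f = 0$, which is \eqref{revxi-vp}.
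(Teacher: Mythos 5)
Your proof is correct and follows essentially the same route as the paper's: it introduces the dual potential $\widetilde R^\Omega_\beta\xi$, takes the symmetrized/antisymmetrized combinations $(u_\beta\pm\widetilde R^\Omega_\beta\xi)/(2c)$ as the candidate extremizers, and expands bilinearly using the weak-solution identities \eqref{solution} together with the lower boundedness $\e_\beta\geq 0$ from Assumption A. The only (harmless) variation is in the lower bound, where you pick $g=g^*+\tilde f$ depending on $f$ so the value equals exactly $1/c$, while the paper fixes $g=g^*$ and uses $\e_\beta(\tilde f,\tilde f)\geq 0$; the symmetric case is handled equivalently.
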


\begin{proof}
Fix $\beta>\beta_0$.
Denote by $\widetilde{u}_\beta=\widetilde{R}_\beta^\Omega\xi\in \mathscr{F}^\Omega$.
Then it follows from \eqref{solution} that
\be \lb{xi-u}
\lan\xi,\widetilde{u}_\beta\ran_2=\e_\beta(\widetilde{u}_\beta,\widetilde{u}_\beta) =\e_\beta(u_\beta,\widetilde{u}_\beta)=\e_\beta(u_\beta,u_\beta)=\lan\xi,u_\beta\ran_2.
\de
For the convenience of the notation, we set
	$$
	w_\beta=\frac{u_\beta}{\lan\xi, u_\beta\ran_2}\ \text{ and }\ \widetilde{w}_{\beta}=\frac{\widetilde u_\beta}{\lan\xi, u_\beta\ran_2}.
	$$
Then from \eqref{xi-u} and Appendix,  it is easy to check that
	$$
	\bar{w}_\beta:=(w_\beta+\widetilde{w}_\beta)/2\in \cM_{\Omega,1}\q \text{and}\q \hat{w}_\beta:=(w_\beta-\widetilde{w}_\beta)/2\in \cM_{\Omega,0}.
	$$
Moreover,
	\be\lb{n-w-w}
	\e_\beta(w_\beta,w_\beta)=\e_\beta(w_\beta,\widetilde{w}_\beta) =1/\lan\xi,u_\beta\ran_2=1/\e_\beta(u_\beta,\widetilde{u}_\beta).
	\de
	
To prove \eqref{xi-vp}, we first take $f=\bar{w}_\beta$ in the infimum. For any $g\in\cM_{\Omega,0}$, let $g_1=g-\hat{w}_\beta$. It is clear that $g_1\in\cM_{\Omega,0}$ since $\hat{w}_\beta\in \cM_{\Omega,0}$. Combining this fact with \eqref{solution} and \eqref{xi-u}, we get
	\begin{equation}\lb{n-geq1}
	\e_\beta(w_\beta,g_1)=\e_\beta(g_1,\widetilde{w}_\beta)=\frac{\lan \xi,g_1\ran_2}{\e_\beta(u_\beta,\widetilde{u}_\beta)}=0.
	\end{equation}
	Thanks to \eqref{n-w-w}--\eqref{n-geq1} and the fact $\e_\beta(g_1,g_1)\geq 0$ by {\bf Assumption A} (A1), we have
	$$
	\aligned
	\e_\beta(\bar{w}_\beta+g,\bar{w}_\beta-g)&=\e_\beta(\bar{w}_\beta+\hat{w}_\beta+g_1,\bar{w}_\beta-\hat{w}_\beta-g_1)\\
	&=\e_\beta(w_\beta,\widetilde{w}_\beta)+\e_\beta(g_1,\widetilde{w}_\beta)-\e_\beta(w_\beta,g_1) -\e_\beta(g_1,g_1)\leq 1/\e_\beta(u_\beta,\widetilde{u}_\beta),
	\endaligned
	$$
  this implies
	\be\lb{n-geq}
	1/\e_\beta(u_\beta,\widetilde{u}_\beta)\geq \inf_{f\in \cM_{\Omega,1}}\sup_{g\in { \cM_{\Omega,0}}}\e_\beta(f+g,f-g).
	\de

On the other hand, we take $g=\hat{w}_\beta$ in the supremum of \eqref{xi-vp}. For any $f\in \cM_{\Omega,1}$, set $f_1=f-\bar{w}_\beta$. We also have that $f_1\in\cM_{\Omega,0}$ from $\bar{w}_\beta\in \cM_{\Omega,1}$. With replacing $g_1$ by $f_1$ in \eqref{n-geq1}, we arrive at
	$$
	\aligned	\e_\beta(f+\hat{w}_\beta,f-\hat{w}_\beta)&=\e_\beta(w_\beta+f_1,\widetilde{w}_\beta+f_1)\\
	&=\e_\beta(w_\beta,\widetilde{w}_\beta)+\e_\beta(w_\beta,f_1)+\e_\beta(f_1,\widetilde{w}_\beta)+\e_\beta(f_1,f_1)\geq 1/\e_\beta(u_\beta,\widetilde{u}_\beta),
	\endaligned
	$$
	which implies
	\be\lb{n-leq}
	1/\e_\beta(u_\beta,\widetilde{u}_\beta)\leq \inf_{f\in \cM_{\Omega,1}}\sup_{g\in \cM_{\Omega,0}}\e_\beta(f+g,f-g).
	\de
	Combining \eqref{n-geq}--\eqref{n-leq}, we obtain the first assertion.
In particular, if $\e$ is symmetric,  from the fact
	$$\e_\beta(f+g,f-g)= \e_\beta(f,f)-\e_\beta(g,g)\leq \e_\beta(f,f),$$
the proof is complete immediately.
\end{proof}

\begin{rem}
 As we have been seen that the key point in the proof of Theorem \ref{main-dp} is the existence of the solution to \eqref{poi}, which is widely studied in probability, see for example \cite{Fu80,MR92,O13} in Dirichlet theory and \cite{Pi95} in diffusions. It is known that \eqref{poi} is closed related to the properties of the associated process, such as the convergence rate of the semigroup (see e.g. \cite{Ma97}), we leave a more detailed discussion starting from Theorem \ref{main-dp} in this topic for future work.
\end{rem}

We now proceed to prove Theorem \ref{main-lt} and Corollary \ref{revm-lt}.

\medskip
\noindent{\bf Proof of Theorem \ref{main-lt}.}
Fix an open set $\Omega\subset E$ and $\beta>\beta_0$. We first suppose  that $\mu(\Omega)<\infty$. Since $\beta R_\beta^\Omega {\bf 1}_{\Omega}(x)=1-\bE_x[\exp(-\beta\tau_\Omega)]$, \eqref{solution} yields that $u_\beta:=\big((1-\bE_x[\exp(-\beta\tau_\Omega)])/\beta:x\in E\big)$ is the unique weak solution of Poisson's equation
\be\lb{poi-lt}
\begin{cases}
	(\beta-\cL)u=1,&\q \text{in }\Omega;\\
	u=0,&\q \text{q.e. on }\Omega.
\end{cases}
\de
Applying \eqref{xi-vp} to \eqref{poi-lt}, we obtain \eqref{vp-lt}.

Assume additionally that $(\e^\Omega,\mathscr{F}^\Omega)$ is transient and $\beta_0=0$. According to \cite[Theorem 1.3.9]{O13} and $R^\Omega{\bf 1}(x)=\bE_x[\tau_\Omega]$, where operator $R^\Omega$ is defined in \eqref{ivs} replacing $R_{1/n}$ with $R^\Omega_{1/n}$, we see that $u_0:=(\bE_x[\tau_\Omega]:x\in E)$ is the weak solution to
\be\lb{poi-m}
\begin{cases}
	-\cL u=1,&\q \text{in }\Omega;\\
	u=0,&\q \text{q.e. on }\Omega^c.
\end{cases}
\de
Therefore, \eqref{vp-m} follows from \eqref{xi-vp} and \eqref{poi-m}.

For general open set $\Omega$, we only prove \eqref{vp-lt}, and the proof of \eqref{vp-m} is quite similar.
Let $(\Omega_n)_{n\geq1}$ be a sequence of open sets such that $\mu(\Omega_n)<\infty$, $\Omega_n\subset\Omega_{n+1}$ and $\cup_{n=1}^\infty\Omega_n=\Omega$. Denote
$$
\cN'_{\Omega_n,\delta}=\{f\in C_0(\Omega_n)\cap\scr{F}:\mu(f)=\delta\}\ \text{and}\ \cN'_{\Omega,\delta}=\{f\in C_0(\Omega)\cap\scr{F}:\mu(f)=\delta\},\q \delta=0,1.
$$
We claim that
\be\lb{vp-lt'}
\frac{\beta}{\bE_\mu[1-\exp(-\beta\tau_\Omega)]}
=\inf_{f\in\cN_{\Omega,1}'}\sup_{g\in\cN_{\Omega,0}'}\e_\beta(f-g,f+g),
\de
and then \eqref{vp-lt} holds as we mentioned in Remark \ref{rem-mr1} (2).
Indeed, for any $f_0\in\cN_{\Omega,1}'$, there is integer $m>0$ such that $\text{supp}(f_0)\subset \Omega_m$, thus $f_0\in\cN_{\Omega_m,1}'$. Applying the above argument and Remark \ref{rem-mr1} (2) to $\Omega_m$, we arrive at
$$
\aligned
\frac{\beta}{\bE_\mu[1-\exp(-\beta\tau_{\Omega_m})]}&=\inf_{f\in \cN_{\Omega_m,1}'}\sup_{g\in\cN_{\Omega_m,0}'}\e_\beta(f-g,f+g)\leq \sup_{g\in\cN_{\Omega_m,0}'}\e_\beta(f_0-g,f_0+g)\\
&\leq \sup_{g\in\cN_{\Omega,0}'}\e_\beta(f_0-g,f_0+g).
\endaligned
$$
Combining this with the fact $\tau_{\Omega_m}\leq \tau_\Omega$ gives that
$$
\frac{\beta}{\bE_\mu[1-\exp(-\beta\tau_\Omega)]}\leq \inf_{f\in\cN_{\Omega,1}'}\sup_{g\in\cN_{\Omega,0}'}\e_\beta(f-g,f+g).
$$

Moreover, it follows directly from the similar argument in the proof of \cite[Theorem 3.5]{HKM20+} that
$$
\frac{\beta}{\bE_\mu[1-\exp(-\beta\tau_\Omega)]}\geq \inf_{f\in\cN_{\Omega,1}'}\sup_{g\in\cN_{\Omega,0}'}\e_\beta(f-g,f+g).
$$
Hence, by the above analysis \eqref{vp-lt'} holds.     \qed

\medskip
\noindent{\bf Proof of Corollary \ref{revm-lt}.} The assertions follow immediately from Theorem \ref{main-lt} and the symmetry of $(\e,\scr{F})$.
\qed

%%%%%%%%%%%%%%%%%%%%%%%%%%%%%%%%%%%%%%%%%%%%%%%%%%%%%%%%%%%%%%%%%%%
\subsection{An example of diffusions with $\alpha$-stable jumps}\lb{mr1-app}

In this subsection, we provide an explicit example to illustrate Theorem \ref{main-lt}.  Take $E=\bR^d$ and $\mu(\d x)=\d x$ the Lebesgue measure in the rest of this section.

Consider the following integro-differential operator in divergence form on $\bR^d$:
\be\lb{int-dif}
\cL f(x):=\nabla\cdot a\nabla f(x)-b\cdot \nabla f(x)+\Delta^{\alpha/2} f(x),\q f\in C^\infty_0(\bR^d),
\de
where $\Delta^{\alpha/2}:=-(-\Delta)^{\alpha/2}$, $\alpha\in(0,2)$ is the fractional Laplace operator. That is,
\begin{align}
\label{frac}
\Delta^{\alpha/2}f(x)&=\int_{\bR^d}\big(f(x+z)-f(x)-\nabla f(x)\cdot z{\bf1}_{\{|z|\leq1\}}\big)\frac{c_{d,\alpha}}{|z|^{d+\alpha}}dz,
\end{align}
where  $c_{d, \alpha}={\alpha2^{\alpha-1}\Gamma(\frac{ \alpha+d}{2})}/(\pi^{d/2}\Gamma(1-\alpha/2)).$ For coefficients $a$ and $b$, we give the following conditions:
\begin{itemize}
\item[(B1)] $a_{ij}\in C^1(\bR^d),\ 1\leq i,j\leq d$ and there exist constants $0<\Lambda_1\leq \Lambda_2$ such that
$$
\Lambda_1|v|^2\leq v\cdot a(x)v\leq \Lambda_2 |v|^2\q \text{for all }x, v\in\bR^d.
$$

 \item[(B2)] %$b_i\in L^{p_0}(\Omega)$ for some $p_0$ with $d\leq p_0\leq\infty$ if $\Omega$ is bounded and when $\Omega$ is unbounded
    $b_i\in L^d(\bR^d,\d x)$ for $i=1,\cdots,d$ such that
     $
     ||b||_d:=\sum_{i=1}^d \|b_i\|_d\le \frac{\Lambda_1}{2C^*},
     $
    where $C^*$ is the constant in the Gagliado-Nirenberg-Sobolev inequality (e.g. see, \cite[Lemma 3.1]{Ue14}).
\end{itemize}
Define the associated bilinear form by
\begin{align}\label{se_D:3}
\e( f,g)&= \int_{\bR^d} \nabla f(x)\cdot a(x)\nabla g(x)\d x+\int_{\bR^d} f(x)b(x)\cdot \nabla g(x)\d x\\
&\q+\int\int_{x\neq y}\big(f(x)-f(y)\big)\big(g(x)-g(y)\big)\frac{c_{d,\alpha}}{|x-y|^{d+\alpha}}\d x\d y,\q f,g\in C^1_0(\bR^d).\nn
\end{align}

Under conditions (B1)--(B2), \cite[Theorem 3.1]{Ue14} tells us that the bilinear form $\e$ defined by \eqref{se_D:3} extends from $C^1_0(\bR^d)\times C^1_0(\bR^d)$ to $\mathscr{F}\times \mathscr{F}$, and it is a lower bounded closed form on $L^2(\bR^d,dx)$. Moreover, $(\e,\mathscr{F})$ is regular on $L^2(\bR^d,dx)$ so that there is an associated Hunt process $X$. Therefore, Theorem \ref{main-lt} holds for this example.

%Therefore, \eqref{vp-lt} in Theorem \ref{main-lt} holds for this example. Furthermore, for an open set $\Omega\subset \bR^d$, if the killed process $X^\Omega$ is transient (this condition holds, for example, if $X$ is transient or $\mu(\Omega^c)>0$ ) and $\beta_0=0$,  \eqref{vp-m} also holds.

In the following, we give some further observations of the exit time of the Hunt process $X$ by Theorem \ref{main-lt}.
%if $\Omega$ is a bounded domain in $\bR^d$ as follows.
For that, we perturb $\cL$ by a growing drift and define
$$
\cL_k=\nabla\cdot a\nabla -kb\cdot \nabla +\Delta^{\alpha/2},\quad k\in \bR.
$$
When conditions (B1) and (B2) for $kb$ hold, denote $X^{(k)}$ by the associated Hunt process of $\cL_k$. Note that $X^{(0)}$ is a symmetric process with respect to the Lebesgue measure.
For any open set $\Omega\subset\bR^d$, let $\tau^{(k)}_\Omega$ be the exit time of process $X^{(k)}$.

From Theorem \ref{main-lt}, we present the following comparison theorem for the processes $X^{(k)},k\in\bR$.

\begin{thm}\lb{comp-thm1}
Assume that $\cL$ defined by
 \eqref{int-dif} satisfies
  $\text{div}(b):=\sum_{i=1}^d\partial_{x_i} b_i=0$
 and {\rm (B1)--(B2)}. Let $\Omega\subset\bR^d$ be a bounded open set and $k_0=\Lambda_1/(2C^*||b||_d)$. Then the following statements hold.
\begin{itemize}
\item[\rm(1)] For all $|k|\leq k_0$ and $\beta>0$,
\be\lb{lt_k-k}
\int_\Omega\bE_x[\exp(-\beta\tau_\Omega^{(k)})]\d x=\int_\Omega\bE_x[\exp(-\beta\tau_\Omega^{(-k)})]\d x.
\de
Moreover, $\int_\Omega\bE_x[\tau_\Omega^{(k)}] \d x= \int_\Omega\bE_x[\tau_\Omega^{(-k)}]\d x$.

\item[\rm(2)] Fix $\beta>0$.
$\int_\Omega\bE_x[\exp(-\beta\tau_\Omega^{(k)})]\d x$ is non-decreasing and $\int_\Omega\bE_x[\tau_\Omega^{(k)}] \d x$ is non-increasing for $k\in [0,k_0]$.
\end{itemize}
\end{thm}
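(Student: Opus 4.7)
My approach is to apply Theorem \ref{main-lt} to both $X^{(k)}$ and $X^{(-k)}$ and exploit the antisymmetry created by $\mathrm{div}(b)=0$, which exhibits the family $\{\e^{(k)}\}$ as a straight-line perturbation of one common symmetric form. First I would verify that Theorem \ref{main-lt} is available uniformly in $|k|\le k_{0}$: for such $k$ one has $\|kb\|_{d}\le\Lambda_{1}/(2C^{\ast})$, so (B1)--(B2) hold for $kb$ and by \cite{Ue14} we obtain the regular closed semi-Dirichlet form $(\e^{(k)},\mathscr{F})$ on a common Hilbert space $\mathscr{F}$. Moreover the identity $\int f\,(kb)\cdot\nabla f\,\d x = -\tfrac12\int f^{2}\,\mathrm{div}(kb)\,\d x = 0$ forces $\beta_{0}=0$ for every such $k$, and for bounded $\Omega$ the part form $(\e^{(k),\Omega},\mathscr{F}^{\Omega})$ is transient, so \eqref{vp-lt} and \eqref{vp-m} both apply.

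Decompose $\e^{(k)}=\e_{s}+k\cB$, where $\e_{s}$ collects the symmetric diffusion and L\'evy-jump parts of \eqref{se_D:3} and $\cB(f,g):=\int_{\bR^{d}}f\,b\cdot\nabla g\,\d x$. Integration by parts together with $\mathrm{div}(b)=0$ shows $\cB(f,g)=-\cB(g,f)$, and in particular $\cB(f,f)=0$. A direct expansion then yields the pivotal identity
$$
\e^{(k)}_{\beta}(f+g,f-g)=\e_{s,\beta}(f,f)-\e_{s,\beta}(g,g)-2k\,\cB(f,g),
$$
together with the analogous $\beta$-free identity for $\e^{(k)}$.

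For (1), note that $\cN_{\Omega,0}$ is closed under $g\mapsto -g$; since $\e_{s,\beta}(-g,-g)=\e_{s,\beta}(g,g)$ and $\cB(f,-g)=-\cB(f,g)$, the inner supremum over $g$ in \eqref{vp-lt} is unchanged if $k$ is replaced by $-k$. Therefore the right-hand side of \eqref{vp-lt} is invariant under $k\mapsto -k$, and because
$$
\int_{\Omega}\bE_{x}[1-\exp(-\beta\tau^{(k)}_{\Omega})]\,\d x = |\Omega| - \int_{\Omega}\bE_{x}[\exp(-\beta\tau^{(k)}_{\Omega})]\,\d x,
$$
this gives \eqref{lt_k-k}; the mean-exit-time identity follows from the same manipulation applied to \eqref{vp-m}. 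For (2), fix $f\in\cN_{\Omega,1}$ and set
$$
H_{f}(k):=\sup_{g\in\cN_{\Omega,0}}\bigl\{-\e_{s,\beta}(g,g)-2k\,\cB(f,g)\bigr\}.
$$
As a supremum of affine functions of $k$, $H_{f}$ is convex in $k$, and by the same $g\mapsto -g$ symmetry it is even. Any convex even function is non-decreasing on $[0,\infty)$, so the right-hand side of \eqref{vp-lt}, which equals $\inf_{f\in\cN_{\Omega,1}}\{\e_{s,\beta}(f,f)+H_{f}(k)\}$, is non-decreasing on $[0,k_{0}]$. Inverting the Laplace-transform formula yields the claimed monotonicity of $\int_{\Omega}\bE_{x}[\exp(-\beta\tau^{(k)}_{\Omega})]\,\d x$, and the mean-exit-time monotonicity is identical via \eqref{vp-m}.

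The main technical point that needs care is justifying $\cB(f,g)=-\cB(g,f)$ (and hence $\cB(f,f)=0$) beyond $C^{1}_{0}(\bR^{d})$ test functions, since elements of $\mathscr{F}^{\Omega}$ are a priori only abstract Hilbert-space vectors. I would handle this either by approximating $f,g\in\mathscr{F}^{\Omega}$ in the $\e^{(k)}_{\beta}$-norm by elements of $C_{0}(\Omega)\cap\mathscr{F}$ (using continuity of $\cB$ in this norm, which follows from the Gagliardo--Nirenberg--Sobolev estimate underlying (B2)), or more cleanly by working throughout with the replacement $\cN'_{\Omega,\delta}$ noted in Remark \ref{rem-mr1} (2), on which the integration-by-parts identity is immediate.
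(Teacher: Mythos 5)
Your proposal is correct and follows essentially the same route as the paper: write $\e^{(k)}_\beta(f+g,f-g)=\e^{(0)}_\beta(f,f)-\e^{(0)}_\beta(g,g)-2k\check{\e}(f,g)$ with the antisymmetric drift form, apply Theorem \ref{main-lt} on the $C^1_0(\Omega)$ (resp.\ $\cN'_{\Omega,\delta}$) core, and use the $g\mapsto-g$ symmetry for part (1). Your convexity-plus-evenness argument for part (2) is just a repackaging of the paper's observation that the supremum may be restricted to $g$ with $\check{\e}(f,g)\le 0$, on which $-2k\check{\e}(f,g)$ is non-decreasing in $k$, so the two proofs coincide in substance.
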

\begin{proof}
It is straightforward to check that the constant $\beta_0=0$ by \cite[Proposition 3.2]{Ue14} under the conditions (B1)--(B2) and the definition of $\cL$ in \eqref{int-dif}.

For $|k|\le k_0$, let $\e^{(k)}$ be the semi-Dirichlet form
of $\cL_k$ defined by
\begin{align*}
		%\label{se_D:4}
\e^{(k)}( f,g)&= \int_{\bR^d} \nabla f(x)\cdot a(x)\nabla g(x)\d x+k\int_{\bR^d} f(x)b(x)\cdot \nabla g(x)\d x\\
	&\q+\int\int_{x\neq y}\big(f(x)-f(y)\big)\big(g(x)-g(y)\big)\frac{c_{d,\alpha}}{|x-y|^{d+\alpha}}\d x\d y,\q f,g\in C^1_0(\bR^d),\nn
\end{align*}
 %\eqref{se_D:3}
 and $\e^{(k)}_\beta(f,g):=\e^{(k)}(f,g)+\beta\lan f,g\ran_2$, $\beta>0$. For convenience, we also define
$$
\check{\e}(f,g)=\int_{\bR^d}f(x)b(x)\cdot \nabla g(x)\d x,\q f,g\in C_0^1(\bR^d).
$$
It is easy to see that $\check{\e}(f,g)=-\check{\e}(g,f)$ as well as $\e^{(k)}_\beta(f,g)=\e^{(0)}_\beta(f,g)+ k\check{\e}(f,g),\ f,g\in C_0^1(\bR^d).$

Let $\Omega\subset \bR^d$ be a bounded open set. It is clear that $C^1_0(\Omega)$ is $\e_\beta$-dense in $\scr{F}^\Omega$ for any $\beta>0$, thus Theorem \ref{main-lt} is still true if we replace $\mathscr{F}^\Omega$ with $C^1_0(\Omega)$ in the definition of $\cN_{\Omega,\delta},\delta=0,1$.
Therefore, by \eqref{vp-lt} and $\text{div}(b)=0$ we obtain that
\begin{align}
&\frac{\beta}{\int_\Omega (1-\bE_x[\exp(-\beta\tau_\Omega^{(k)})])\d x}\nn\\
&=\inf_{f\in\cN_{\Omega,1}}\sup_{g\in \cN_{\Omega,0}}\big\{\e^{(0)}_\beta(f+g,f-g)+ k\check{\e}(f+g,f-g)\big\}\nn\\
&=\inf_{f\in\cN_{\Omega,1}}\sup_{g\in \cN_{\Omega,0}}\big\{ \e^{(0)}_\beta(f,f)-\e^{(0)}_\beta(g,g)-2k\check{\e}(f,g)\big\}\lb{nvp}\\
&=\inf_{f\in\cN_{\Omega,1}}\sup_{g\in \cN_{\Omega,0}}\big\{ \e^{(0)}_\beta(f,f)-\e^{(0)}_\beta(g,g)-(-2k)\check{\e}(f,g)\big\}\nn\\
&=\frac{\beta}{\int_\Omega (1-\bE_x[\exp(-\beta\tau_\Omega^{(-k)})])\d x}.\nn
\end{align}
Here we did a change of variable $g\rightarrow -g$ in the third equality. Hence, this equality coupled with the fact that $\Omega$ is bounded, leads to \eqref{lt_k-k}.

Next fix $\beta>0$. For $k\in [0,k_0]$ and $f\in\cN_{\Omega,1}$, we claim that the supremum in \eqref{nvp} is attained by $g\in \cN_{\Omega,0}$ satisfying $\check{\e}(f,g)\leq 0$. To see this, note that for $g\in \cN_{\Omega,0}$ with $\check{\e}(f,g)\geq 0$,
$$
\e^{(0)}_\beta(f,f)-\e^{(0)}_\beta(g,g)-2k\check{\e}(f,g)\leq  \e^{(0)}_\beta(f,f)-\e^{(0)}_\beta(-g,-g)-2k\check{\e}(f,-g).
$$
Combining this claim with the fact that
$
k\rightarrow -2k\check{\e}(f,g)\ \text{is non-decreasing in }[0,k_0]\ \text{for }g\in\cN_{\Omega,0}\ \text{with }\check{\e}(f,g)\leq 0,
$
we see that $\int_\Omega\bE_x[\exp(-\beta\tau_\Omega^{(k)})]\d x$ is non-decreasing for $k\in [0,k_0]$.

Applying similar arguments to \eqref{vp-m}, we obtain the assertions for the mean exit time.
%%%I change the following sentence to above: For the mean exit time, using the fact that
%$$
%\lim_{\beta\rightarrow 0}\frac{1-\exp(-\beta\tau_\Omega^{(k)})}{\beta}=\tau_\Omega^{(k)}
%$$
%gives us the desired results.
\end{proof}

In the following, we consider another family of operators defined by
\be\lb{L-kap-ep}
\cL_{\kappa,\epsilon}= \kappa\nabla\cdot a\nabla+\epsilon\Delta^{\alpha/2},\q \kappa,\epsilon>0.
\de
It can be checked that $\cL_{\kappa,\epsilon}$ is self-adjoint in $L^2(\bR^d,\d x)$. Under (B1), we have a family of processes $X^{(\kappa,\epsilon)}$ with generator $\cL_{\kappa,\epsilon}$ for $\kappa,\epsilon>0$.
For any open set $\Omega\subset\bR^d$, denote  $\tau^{(\kappa,\epsilon)}_\Omega$ by the exit time of process $X^{(\kappa,\epsilon)}$.

\medskip

Thanks to the variational formulas for the exit time again, a comparison theorem for processes $X^{(\kappa,\epsilon)}$, $\kappa,\epsilon>0$ is presented as follows.

\begin{thm}\lb{comp-thm2}
Assume that $\cL_{\kappa,\epsilon}$, $\kappa,\epsilon>0$ defined in \eqref{L-kap-ep} satisfy {\rm(B1)}. Let $\Omega\subset \bR^d$ be a bounded open set. Then for any $\beta>0$,
$\int_\Omega\bE_x[\exp(-\beta\tau_\Omega^{(\kappa,\epsilon)})]\d x$ is non-decreasing and $\int_\Omega\bE_x[\tau_\Omega^{(\kappa,\epsilon)}]\d x$ is non-increasing for both $\kappa$ and $\epsilon$.
%Additionally, if $\kappa=0$, then we have
%\be\lb{order}
%\int_\Omega\bE_x\tau_\Omega^{(0,\epsilon)}\d x= O(\epsilon^{-\alpha}),\q \text{as }\epsilon\rightarrow 0^+.
%\de
\end{thm}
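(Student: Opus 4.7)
The plan is to apply the symmetric variational formula in Corollary \ref{revm-lt}, since $\cL_{\kappa,\epsilon}$ is self-adjoint in $L^2(\bR^d,\d x)$. First I would write down the associated symmetric Dirichlet form, which is obtained by setting $b\equiv 0$ in \eqref{se_D:3} and rescaling:
$$
\e^{(\kappa,\epsilon)}(f,g)=\kappa\int_{\bR^d}\nabla f\cdot a\nabla g\,\d x+\epsilon\int\int_{x\neq y}\big(f(x)-f(y)\big)\big(g(x)-g(y)\big)\frac{c_{d,\alpha}}{|x-y|^{d+\alpha}}\d x\d y
$$
on an appropriate domain $\mathscr{F}^{(\kappa,\epsilon)}$. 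Condition (B1) combined with \cite[Theorem 3.1]{Ue14} (applied with $b\equiv 0$) gives that $(\e^{(\kappa,\epsilon)},\mathscr{F}^{(\kappa,\epsilon)})$ is a regular symmetric Dirichlet form on $L^2(\bR^d,\d x)$. Since there is no drift term, the form is non-negative, so $\beta_0=0$. For any bounded open $\Omega$, the part form $(\e^{(\kappa,\epsilon),\Omega},\mathscr{F}^{\Omega})$ is transient as a consequence of the Poincar\'e/Sobolev inequality on bounded sets.

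The key observation is that $\e^{(\kappa,\epsilon)}(f,f)$ decomposes as $\kappa A(f,f)+\epsilon B(f,f)$, where
$$
A(f,f):=\int_{\bR^d}\nabla f\cdot a\nabla f\,\d x\geq 0,\q B(f,f):=\int\int_{x\neq y}\big(f(x)-f(y)\big)^2\frac{c_{d,\alpha}}{|x-y|^{d+\alpha}}\d x\d y\geq 0,
$$
both non-negative by (B1) and the kernel positivity. Hence for every $f\in\cN_{\Omega,1}$, the map $(\kappa,\epsilon)\mapsto \e^{(\kappa,\epsilon)}_\beta(f,f)=\kappa A(f,f)+\epsilon B(f,f)+\beta\|f\|_2^2$ is coordinatewise non-decreasing. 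Since the pointwise supremum (or infimum) of a family of coordinatewise non-decreasing functions is again coordinatewise non-decreasing, the map
$$
(\kappa,\epsilon)\longmapsto \inf_{f\in \cN_{\Omega,1}}\e^{(\kappa,\epsilon)}_\beta(f,f)
$$
is non-decreasing in both arguments.

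By Corollary \ref{revm-lt} applied to $\e^{(\kappa,\epsilon)}$, this infimum equals $\beta/\int_\Omega(1-\bE_x[\exp(-\beta\tau_\Omega^{(\kappa,\epsilon)})])\d x$; using that $\Omega$ is bounded so $\int_\Omega 1\,\d x<\infty$, one then rearranges to conclude that $\int_\Omega\bE_x[\exp(-\beta\tau_\Omega^{(\kappa,\epsilon)})]\d x$ is non-decreasing in $\kappa$ and $\epsilon$. The analogous argument applied to the second formula in Corollary \ref{revm-lt} gives $1/\int_\Omega\bE_x[\tau_\Omega^{(\kappa,\epsilon)}]\d x$ non-decreasing, hence $\int_\Omega\bE_x[\tau_\Omega^{(\kappa,\epsilon)}]\d x$ non-increasing. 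There is no real obstacle here beyond verifying regularity/transience of the form; the monotonicity itself is essentially automatic from the symmetric variational formula, which is exactly why the symmetric case in Corollary \ref{revm-lt} is so much cleaner than the non-symmetric $\inf$-$\sup$ expression needed for Theorem \ref{comp-thm1}.
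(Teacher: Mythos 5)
Your proposal is correct and follows essentially the same route as the paper: write the symmetric form as $\kappa A+\epsilon B$ with $A,B\ge 0$, invoke Corollary \ref{revm-lt} (with $\beta_0=0$ and transience of the part form on the bounded set $\Omega$), and observe that the infimum of coordinatewise non-decreasing maps is non-decreasing, then rearrange using $\mu(\Omega)<\infty$. Your explicit remarks on transience and on the boundedness of $\Omega$ just spell out steps the paper leaves implicit.
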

\begin{proof}
For fixed $\kappa,\epsilon>0$, since $\cL_{\kappa,\epsilon}$ is self-adjoint in $L^2(\bR^d,\d x)$, the associated Dirichlet form
\begin{align*}
\e^{(\kappa,\epsilon)}( f,g)&:=\kappa\int_{\bR^d} \nabla f(x)\cdot a(x)\nabla g(x)\d x\\
&\q +\eps\int\int_{x\neq y}\big(f(x)-f(y)\big)\big(g(x)-g(y)\big)\frac{ c_{d,\alpha}}{|x-y|^{d+\alpha}}\d x\d y
%,\q \text{for }f,g\in C^\infty_c(\bR^d).\nn
\end{align*}
is symmetric. So it follows from Corollary \ref{revm-lt} that
$$
\frac{\beta}{\int_\Omega(1-\bE_x[\exp(-\beta\tau_\Omega^{(\kappa,\epsilon)})])\d x}=\inf_{f\in\cN_{\Omega,1}}{ \e_{\beta}^{(\kappa,\epsilon)}}(f,f)\q \text{for }\beta>0,
$$
where $\e_{\beta}^{(\kappa,\epsilon)}(\cdot, \cdot):=\e^{(\kappa,\epsilon)}( \cdot, \cdot)+{\beta}\langle \cdot, \cdot\rangle_2$,
and
$$
\Big(\int_\Omega\bE_x[\tau_\Omega^{(\kappa,\epsilon)}]\d x\Big)^{-1}=\inf_{f\in\cN_{\Omega,1}}\e^{(\kappa,\epsilon)}(f,f).
$$
Therefore, the monotonicity of the Laplace transform of the exit time and the mean exit time follows by the definition of $\e^{(\kappa, \eps)}$ directly.
%If $\kappa=0$, then use Theorem \ref{main-lt} again we get
%$$
%\big(\int_\Omega\bE_x\tau_\Omega^{(0,\epsilon)}\d x\big)^{-1}=\epsilon^{\alpha}\inf_{f\in\cN_{\Omega,1}}\int_{\bR^d}f(x)(-\Delta^{\alpha/2})f(x)\d x.
%$$
%Thus the desired result holds.
\end{proof}

%\begin{rem}
%Note that \cite{IP06,YD08} consider more general stable processes with drifts on $\bR$ and obtain the same order $O(\epsilon^{-\alpha})$ for the mean exit time. We extend part of their results to the higher dimensions in \eqref{order}.
%\end{rem}

%%%%%%%%%%%%%%%%%%%%%%%%%%%%%%%%%%%%%%%%%%%%%%%%%%%%%%%%%%%%%%%%%%%%

\section{ The exit time of reversible ergodic Markov processes}\lb{main-s2}

We consider reversible ergodic Markov processes in this section. In Subsection \ref{proof-mr2} we give a proof of Theorem \ref{main-exp}, while Subsection \ref{exp-poinc} is devoted to the proof of Corollary \ref{up-low-b} and some interesting estimates of the exit time. In final part of this section, we introduce an example.

\subsection{Proof of Theorem \ref{main-exp}}\lb{proof-mr2}

 To prove Theorem \ref{main-exp}, we first begin with some preparation. Recall that  $Y=(Y_t)_{t\geq0}$ is a  right-continuous reversible ergodic strong Markov process on the Polish space $(E,\mathscr{B}(E))$ with transition kernel $P_t(x,dy),t\geq0,x,y\in E$ and stationary distribution $\pi$.
%We use the same notation $\bE_x$ for the expectation starting from $x$ corresponding to process $Y$ and write $\bE_\pi=\int_E\bE_x\pi(\d x)$.
Let $L^2(E,\pi)$ be the space of square integrable functions with usual scalar product $\lan \cdot,\cdot\ran_{2,\pi}$ and norm $||\cdot||_{2,\pi}$. Let $(L,\scr{D}(L))$ be the infinitesimal generator of the process $Y$ in $L^2(E,\pi)$.
The associated Dirichlet form
$\scr{E}_\pi$ is defined by the completion of the bilinear form
	$$ (f, g) \mapsto\langle-L f, g\rangle_{2,\pi},\quad f,g\in \scr{D}(L)$$
	 with respect to the norm $\left(\langle-L\cdot,\cdot\rangle_{2,\pi}+\|\cdot\|_{2,\pi}\right)^{1/2}$,
%% $(\e_\pi,\scr{D}(\e_\pi))$ as $$\e_\pi(f,g)=\lan (-L)f,g\ran_{2,\pi}\q
\text{and} $$\q \scr{D}(\e_\pi):=\{f\in L^2(E,\pi):\e_\pi(f,f)<\infty\}.$$
For any $\beta\in\bR$, define
$$
\e_{\pi,\beta}(f,g)=\e_\pi(f,g)+\beta\lan f,g\ran_{2,\pi},\q f,g\in \scr{D}(\e_\pi).
$$
It is obvious that $\e_{\pi,0}(\cdot,\cdot)=\e_\pi(\cdot,\cdot)$. Throughout this section,
we assume that
$(\e_\pi,\scr{D}(\e_\pi))$ is regular.

For any open set $\Omega\subset E$, we also denote $\tau_\Omega$ the exit time of process $Y$ from $\Omega$. Recall that $\lambda_0(\Omega)$ and $\lambda_1$, defined in \eqref{di-ei} and \eqref{pi-c}, are the optimal constants of local Poincar\'e inequality on $\Omega$ and Poincar\'e inequality respectively.

Now fix an open set $\Omega\subset E$ and $\beta\in\bR$. Consider the following Poisson's equation
\begin{equation}\lb{rev-p}
\begin{cases}
(-\beta-L)u=1,&\q \text{in } \Omega;\\
 u=0,&\q \text{q.e. on }\Omega^c.
\end{cases}
\end{equation}
A function $u_\beta\in \scr{D}(\e_\pi)$ with $u_\beta=0$ q.e. on $\Omega^c$ is called
%%%LJ I add "with $u_\beta=0$ q.e. on $\Omega^c$"
the weak solution of \eqref{rev-p} if
\begin{equation}\lb{weaksol}
\e_{\pi,-\beta}(u_\beta,f)=\pi(f) \q \text{for all } f\in \scr{D}(\e_\pi)\ \text{with }\check{f}=0\ \text{q.e. on }\Omega^c.
\end{equation}
%where $C_0(\Omega)$  is the set of continuous functions which are equal to 0 on $\Omega^c.$

%One can combine Lemma\ref{int-exp} with \cite[Theorem 2.1 and Lemma 2.1]{KAM11} to obtain the  probabilistic representation for \eqref{rev-p} as follows.

By  a modification of the proof of \cite[Theorem 2.1]{KAM11}, we have the following lemma.

\begin{lem}\lb{int-exp}

Let $\Omega\subset E$ be an open set and assume that $\lambda_0(\Omega)>0$.
Then for any $\beta\in (-\infty, 0)\cup (0, \lambda_0(\Omega))$, $\big(\bE_x[\exp(\beta\tau_\Omega)]:x\in E\big)$ is in $\scr{D}(\e_\pi)$. Furthermore,
$v_\beta:=\big((\bE_x[\exp(\beta\tau_{\Omega})]-1)/\beta: x\in E\big)$ is a weak solution of  \eqref{rev-p}.
\end{lem}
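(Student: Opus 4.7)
The strategy is to identify $v_\beta$ with the resolvent of the killed (part) process on $\Omega$ applied to $\mathbf{1}_\Omega$, and then to use the spectral gap supplied by the local Poincar\'e inequality to obtain simultaneously the $L^2$--bound, the membership in $\scr{D}(\e_\pi)$, and the weak Poisson identity.

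First I would write $e^{\beta\tau_\Omega}-1=\beta\int_0^{\tau_\Omega}e^{\beta t}\,\d t$ and apply Fubini to obtain the representation
$$
v_\beta(x)=\bE_x\Big[\int_0^{\tau_\Omega}e^{\beta t}\,\d t\Big]=\int_0^\infty e^{\beta t}P_t^\Omega\mathbf{1}_\Omega(x)\,\d t,
$$
where $(P_t^\Omega)_{t\geq0}$ denotes the semigroup of $Y$ killed upon leaving $\Omega$. The local Poincar\'e inequality \eqref{poinc-l} is equivalent to the spectral estimate $\|P_t^\Omega\|_{L^2(\pi)\to L^2(\pi)}\leq e^{-\lambda_0(\Omega)t}$, so that, using $\mathbf{1}_\Omega\in L^2(\pi)$,
$$
\|v_\beta\|_{2,\pi}\leq\int_0^\infty e^{(\beta-\lambda_0(\Omega))t}\sqrt{\pi(\Omega)}\,\d t=\frac{\sqrt{\pi(\Omega)}}{\lambda_0(\Omega)-\beta}<\infty
$$
whenever $\beta<\lambda_0(\Omega)$; for $\beta<0$ the bound is trivial since $e^{\beta\tau_\Omega}\leq1$. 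In particular $\bE_\cdot[e^{\beta\tau_\Omega}]=1+\beta v_\beta\in L^2(\pi)$.

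Second, the representation above identifies $v_\beta$ with $R^\Omega_{-\beta}\mathbf{1}_\Omega$, where $R^\Omega_\gamma:=\int_0^\infty e^{-\gamma t}P_t^\Omega\,\d t$ is the resolvent of the killed process, well defined for $\gamma>-\lambda_0(\Omega)$. Because the killed generator $L^\Omega$ is self-adjoint with spectrum contained in $(-\infty,-\lambda_0(\Omega)]$, the point $-\beta$ lies in its resolvent set and $R^\Omega_{-\beta}$ maps $L^2(\pi)$ boundedly into $\scr{D}(L^\Omega)$, with $(-\beta-L^\Omega)v_\beta=\mathbf{1}_\Omega$. Translating this identity to the bilinear form via $\lan-L^\Omega u,f\ran_{2,\pi}=\e_\pi(u,f)$, valid for $u\in\scr{D}(L^\Omega)$ and $f$ in the form domain of the killed process, yields for every $f\in\scr{D}(\e_\pi)$ with $\check{f}=0$ q.e.\ on $\Omega^c$,
$$
\e_{\pi,-\beta}(v_\beta,f)=\lan(-\beta-L^\Omega)v_\beta,f\ran_{2,\pi}=\lan\mathbf{1}_\Omega,f\ran_{2,\pi}=\pi(f),
$$
which is exactly \eqref{weaksol}. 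Since $v_\beta\in\scr{D}(L^\Omega)\subset\scr{D}(\e_\pi)$ and the constant function $1$ also lies in $\scr{D}(\e_\pi)$ (ergodicity with invariant probability measure $\pi$), we conclude $\bE_\cdot[e^{\beta\tau_\Omega}]=1+\beta v_\beta\in\scr{D}(\e_\pi)$.

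\textbf{Main obstacle.} The delicate point is justifying the Fubini interchange pointwise, so that the identification $v_\beta=R^\Omega_{-\beta}\mathbf{1}_\Omega$ holds genuinely and not merely $\pi$--a.e., and verifying that the quasi-continuous version $\check{v}_\beta$ vanishes q.e.\ on $\Omega^c$ so that $v_\beta$ belongs to the form domain of the part Dirichlet form on $\Omega$. This is precisely the technical content handled in the modification of \cite{KAM11}, and the hypothesis $\lambda_0(\Omega)>0$ is indispensable: without it the resolvent $R^\Omega_{-\beta}$ would not exist for $0<\beta<\lambda_0(\Omega)$.
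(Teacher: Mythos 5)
Your argument is correct, but it follows a genuinely different route from the paper's. You identify $v_\beta$ with the resolvent $R^\Omega_{-\beta}\mathbf{1}_\Omega$ of the killed (part) process and invoke the spectral theorem for the self-adjoint generator $L^\Omega$: since the local Poincar\'e inequality says exactly that the spectrum of $-L^\Omega$ lies in $[\lambda_0(\Omega),\infty)$, the point $-\beta$ is in the resolvent set for every $\beta<\lambda_0(\Omega)$, which gives in one stroke the $L^2$-bound, membership of $v_\beta$ in $\scr{D}(L^\Omega)\subset\scr{D}(\e_\pi)$ with $\check v_\beta=0$ q.e.\ on $\Omega^c$, and the identity $\e_{\pi,-\beta}(v_\beta,f)=\lan(-\beta-L^\Omega)v_\beta,f\ran_{2,\pi}=\pi(f)$. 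The paper instead argues by approximation in the spirit of \cite{KAM11}: it truncates the exponential via the functions $\rho_t$, uses the local Poincar\'e inequality together with the identity $\e_\pi(h_{\rho_t},h_{\rho_t})=\lan h_{\rho_t'},h_{\rho_t}\ran_\pi$ to get uniform $L^2$-bounds on $h_{\rho_t}$, passes to the limit by monotonicity and weak $\e_{\pi,1}$-convergence to conclude $h_\rho\in\scr{D}(\e_\pi)$, and then verifies \eqref{weaksol} by approximating test functions from $C_0(\Omega)\cap\scr{D}(\e_\pi)$ using regularity. What your route buys is brevity and an explicit quantitative bound $\|v_\beta\|_{2,\pi}\le\sqrt{\pi(\Omega)}/(\lambda_0(\Omega)-\beta)$, at the price of leaning on the standard but nontrivial facts that the probabilistically killed semigroup is the $L^2$-semigroup of the part form, that the part form domain is precisely $\{f\in\scr{D}(\e_\pi):\check f=0\ \text{q.e.\ on }\Omega^c\}$ (so that $\lambda_0(\Omega)$ is indeed the bottom of the spectrum and the q.e.\ vanishing of $\check v_\beta$ comes for free), and that the Bochner-integral resolvent agrees $\pi$-a.e.\ with the pointwise Tonelli integral; the paper's approach avoids explicit spectral calculus and stays closer to the probabilistic functionals, which is why it needs the truncation and limiting steps. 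Also note that your ``main obstacle'' is less serious than you suggest: the interchange is pure Tonelli since the integrand is nonnegative, and the q.e.\ statement is exactly the part-domain membership you already obtained, so no further appeal to \cite{KAM11} is needed once the part-process identification is granted.
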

\begin{proof}
For $\beta<0$, the desired result is obvious from \cite[Theorem 2.1]{KAM11}. So it suffices to consider the case $\beta\in (0,\lambda_0(\Omega))$ here.

Let $\chi\in C^3(\bR)$ such that $\chi\geq0$, $\chi'\leq0$, $\chi(s)=1,s\leq0$ and $\chi(s)=0,s\geq1$. Let
 $ \theta \in C^3(\mathbb{R}_+)$ satisfy
  $\theta' \geq 0$ and
$\theta(s) = 1, s\geq 1.$
Define
$$
\rho_t(s)=\theta(ts)\int_0^s \beta \exp(\beta r)\chi(r-t)dr,\q s\geq0,\ t\geq1,
$$
and
$h_{\rho_t}(x)=\bE_x[\rho_t(\tau_{\Omega})]$ for $x\in E$.
By \eqref{poinc-l} and \cite[Corollary 2.1 and (10)]{KAM11}, there exists a constant $C>0$ such that
$$\pi(h_{\rho_t}^2)\leq \frac{1}{\lambda_0(\Omega)}\e_\pi(h_{\rho_t},h_{\rho_t})=\frac{1}{\lambda_0(\Omega)}\lan h_{\rho'_t},h_{\rho_t}\ran_\pi\leq \frac{\beta}{\lambda_0(\Omega)}\pi(h_{\rho_t}^2)+\frac{C}{\lambda_0(\Omega)}\pi(h_{\rho_t}),$$
which implies that
$$\left(1-\frac{\beta }{\lambda_0(\Omega)}\right)\pi(h_{\rho_t}^2)\leq \frac{ C}{\lambda_0(\Omega)}\pi(h_{\rho_t}).$$
Moreover, it follows by the Jensen's inequality that  $\pi(h_{\rho_t})\leq ||h_{\rho_t}||_{2,\pi}$.
Thus combining the above inequalities with the setting $0<\beta<\lambda_0(\Omega)$, we have that
$$
||h_{\rho_t}||_{2,\pi}\leq \frac{C}{\lambda_0(\Omega)-\beta}<\infty.
$$
That is, the $L^2$-norms of functions $h_{\rho_t},t\geq1$ are uniformly bounded. This, together with the fact that $\rho_t$ is increasing to function $\rho(s)=\exp(\beta s)-1$ as $t\rightarrow\infty$, yields that
$$
h_\rho(x):=\bE_x[\exp(\beta\tau_\Omega)]-1,\q x\in E
$$
belongs to $L^2(E,\pi)$, and $h_{\rho_t}\rightarrow h_\rho$ as $t\rightarrow\infty$ in $L^2(E,\pi)$. Furthermore, from the similar argument in \cite[p.77]{KAM11}, we see that
 $h_{\rho_t}$ converges weakly to $h_\rho$ with respect to $\e_{\pi,1}$ as $t\rightarrow \infty$. Hence $h_\rho\in \mathscr{D}(\e_\pi)$.

Next for any $f\in \mathscr{D}(\e_\pi)$ with $\check{f}=0$ q.e. on $\Omega^c$, since $(\e_\pi,\scr{D}(\e_\pi))$ is regular, there exists a sequence of functions $(f_n)_{n\geq1}\subset C_0(\Omega)\cap\scr{D}(\e_\pi)$ such that $f_n\rightarrow f$ with respect to $\e_{\pi,1}$ as $n\rightarrow \infty$.
Thus combining this fact with the above analysis and the argument in \cite[p.77]{KAM11} gives the desired result.
\end{proof}

\begin{rem}\lb{p-lp}
In \cite[Theorem 2.1]{KAM11}, the authors showed that
$$
\big(\bE_x [\exp(\beta\tau_\Omega)],x\in E\big)\in \scr{D}(\e_\pi)\q \text{for all } 0<\beta<\lambda_1\pi(\Omega^c)
$$
if $\pi(\Omega^c)>0$. Since by the proof of \cite[Theorem 3.1]{cmf00} or \cite[Theorem 4.10]{cmf05},
\be\lb{poin-lpoin}
\lambda_0(\Omega)\geq \lambda_1\pi(\Omega^c)\q \text{if}\q \pi(\Omega^c)>0,
\de
Lemma \ref{int-exp} provides a more general result.
\end{rem}

With the above preparation in hand, we are ready to prove Theorem \ref{main-exp}.

\medskip
\noindent{\bf Proof of Theorem \ref{main-exp}.} 	
 Let us first consider the case that $\beta\in(0,\lambda_0(\Omega))$ if $\lambda_0(\Omega)>0$. According to Lemma \ref{int-exp}, $v_\beta:=\big((\bE_x[\exp(\beta\tau_{\Omega})]-1)/\beta:x\in E  \big)$ is a weak solution of \eqref{rev-p}. Further, $\bar{v}_\beta:=v_\beta/\pi(v_\beta)\in \cN^\pi_{\Omega,1}$.
Thus for any $f\in\mathscr{D}(\e_\pi)$ with $\check{f}=0$ q.e. on $\Omega^c$,
\be\lb{jc}
\e_{\pi,-\beta}(\bar{v}_\beta,f)=\frac{1}{\pi(v_\beta)}\pi(f).
\de
In particular,
	\begin{equation}\lb{epn-v}
	\e_{\pi,-\beta}(\bar{v}_\beta,\bar{v}_\beta)=\frac{1}{ \pi(v_\beta)}=\frac{\beta}{ \bE_\pi[\exp(\beta\tau_{\Omega})]-1}.
	\end{equation}

For any $f\in \cN^\pi_{\Omega,1},$ let $f_1=f-\bar{v}_\beta.$ Then $f_1\in \scr{D}(\e_\pi)$ with $\check{f}_1=0$ q.e. on $\Omega^c$ and $\pi(f_1)=0$ by $\bar{v}_\beta\in \cN^\pi_{\Omega,1}$. Since $L$ is self-adjoint with respect to $\pi$, we combine with \eqref{jc} to obtain
	\begin{equation}\label{eq lower}
	\begin{split}
	\e_{\pi,-\beta}(f,f)&=\e_{\pi,-\beta}(f_1+\bar{v}_\beta,f_1+\bar{v}_\beta)=\e_{\pi,-\beta}(f_1,f_1)+\e_{\pi,-\beta}(\bar{v}_\beta,\bar{v}_\beta)\\
	&\geq \e_{\pi,-\beta}(\bar{v}_\beta,\bar{v}_\beta),
	\end{split}
	\end{equation}
	where in the last inequality we used the fact that
	$$
	\e_{\pi,-\beta}(f_1,f_1)=\e_\pi(f_1,f_1)-\beta\pi(f_1^2)\geq (\lambda_0(\Omega)-\beta)\pi(f_1^2)\geq 0
	%,\q \text{for all }f_1\in \scr{D}(\e_\pi)
	$$
	for $0<\beta<\lambda_0(\Omega)$.
	So the desired assertion follows by \eqref{epn-v}--\eqref{eq lower}.

 We now consider the case that $\beta\ge\lambda_0(\Omega)$. It is known that $\bE_\pi[\exp(\beta\tau_\Omega)]=\infty$ for $\beta\geq\lambda_0(\Omega)$ (see e.g. \cite{Fr73,LLL14}). So in this case, the left-hand side of \eqref{vp-exp} is equal to zero.
 Moreover, by the definition of $\lambda_0(\Omega)$,  one could see that for any $\epsilon>0$, there exists function $f_\epsilon\in\mathscr{D}(\e_\pi)$ with $\check{f}_\epsilon=0$ q.e. on $\Omega^c$ and $\pi(f^2_\epsilon)=1$ such that
$
\e_\pi(f_\epsilon,f_\epsilon)\leq (\lambda_0(\Omega)+\epsilon).
$
Therefore, for $\beta>\lambda_0(\Omega)$, by taking $\epsilon<\beta-\lambda_0(\Omega)$, we have that
$$
\e_{\pi,-\beta}(\frac{f_\epsilon}{\pi(f_\epsilon)},\frac{f_\epsilon}{\pi(f_\epsilon)})\leq (\lambda_0(\Omega)-\beta+\epsilon)\frac{1}{\pi(f_\epsilon)^2}<0.
$$
Thus the right-hand side of \eqref{vp-exp} is also zero when $\beta>\lambda_0(\Omega)$.
Furthermore, since $\beta\rightarrow\e_{\pi,-\beta}(f,f)$ is continuous, our assertion holds true for $\beta=\lambda_0(\Omega)$.
\qed

\medskip

Under the ergodicity, for fixed open set $\Omega\subset E$ with $\pi(\Omega^c)>0$, we see that for $\pi$-a.e. $x$, $\bE_x \tau_\Omega<\infty$  by a similar argument in the proof of \cite[Lemma 2.1]{Mao02}. This implies that the Dirichlet form $(\e_\pi^\Omega,\mathscr{D}(\e_\pi^\Omega))$, defined by $\e_\pi^\Omega=\e_\pi$ on $\mathscr{D}(\e_\pi^\Omega):=\{f\in\mathscr{D}(\e_\pi):\ \check{f}=0\ \text{q.e. on }\Omega^c\}$, is transient. Thus similar to Theorem \ref{main-lt}, we obtain the following variational formulas for the Laplace transform of the exit time and the mean exit time starting from $\pi$. The proof, which we omit, is almost same as that of Theorem \ref{main-lt}.

\begin{thm}\lb{rev-v1}
Let $\Omega\subset E$ be an open set. Then for any $\beta>0$,
$$\frac{\beta}{1-\bE_\pi[\exp(-\beta\tau_{\Omega})]}=\inf_{f\in\cN^\pi_{\Omega,1}}\e_{\pi,\beta}(f,f). $$
Additionally, if $\pi(\Omega^c)>0$, then
$$
1/\bE_\pi[\tau_\Omega]=\inf_{f\in\cN^\pi_{\Omega,1}}\e_\pi(f,f).
$$	
\end{thm}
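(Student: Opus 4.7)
The plan is to reduce Theorem \ref{rev-v1} to the symmetric case of Theorem \ref{main-dp} (equivalently, Corollary \ref{revm-lt}) applied with $\mu = \pi$ and $\xi = {\bf 1}_\Omega$. Reversibility of $Y$ with respect to $\pi$ makes $(\e_\pi, \scr{D}(\e_\pi))$ a symmetric Dirichlet form, which is regular by hypothesis; this fits the framework of Section \ref{mr1} with $\beta_0 = 0$, so the symmetric variational identity from Theorem \ref{main-dp} is directly available.

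First I would fix an open set $\Omega \subset E$ and $\beta > 0$ and identify the function
$$
u_\beta(x) := \frac{1 - \bE_x[\exp(-\beta \tau_\Omega)]}{\beta}, \quad x \in E,
$$
as the unique weak solution of the Poisson equation
$$
\begin{cases}
(\beta - L)u = 1, & \text{in } \Omega, \\
u = 0, & \text{q.e.\ on } \Omega^c,
\end{cases}
$$
by the resolvent identity $\beta R_\beta^\Omega {\bf 1}_\Omega(x) = 1 - \bE_x[\exp(-\beta\tau_\Omega)]$, exactly as in the first step of the proof of Theorem \ref{main-lt}. Note that $\xi = {\bf 1}_\Omega$ lies in $L^2(\Omega, \pi|_\Omega)$ because $\pi$ is a probability measure, so no approximation by an increasing sequence of sets of finite measure is required.

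Next, Theorem \ref{main-dp} in the symmetric case yields
$$
\frac{1}{\langle {\bf 1}_\Omega, u_\beta\rangle_{2,\pi}} = \inf_{f \in \cM_{\Omega,1}} \e_{\pi,\beta}(f,f).
$$
Since $\langle {\bf 1}_\Omega, u_\beta\rangle_{2,\pi} = (1 - \bE_\pi[\exp(-\beta\tau_\Omega)])/\beta$, and the constraint set $\cM_{\Omega,1}$ associated with $\xi = {\bf 1}_\Omega$ coincides with $\cN^\pi_{\Omega,1}$ (because $\langle {\bf 1}_\Omega, f\rangle_{2,\pi} = \pi(f)$ for $f$ vanishing q.e.\ on $\Omega^c$), the first claimed identity follows at once.

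For the mean-exit-time formula, the additional ingredient under $\pi(\Omega^c) > 0$ is the transience of the trace form $(\e_\pi^\Omega, \scr{D}(\e_\pi^\Omega))$. This is exactly the content of the ergodicity observation preceding the theorem: the argument of \cite[Lemma 2.1]{Mao02}, combined with ergodicity, implies $\bE_x[\tau_\Omega] < \infty$ for $\pi$-a.e.\ $x$, which produces a finite potential $R^\Omega{\bf 1}_\Omega$ and hence transience. One then repeats the $\beta = 0$ step of the proof of Theorem \ref{main-lt}: $v_0(x) := \bE_x[\tau_\Omega]$ is the weak solution of $-Lu = 1$ in $\Omega$ with zero trace on $\Omega^c$, and applying the symmetric case of Theorem \ref{main-dp} with $\xi = {\bf 1}_\Omega$ and $\beta = 0$ gives the second identity. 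The only genuine technical check is that the Section \ref{mr1} machinery, nominally written over a locally compact separable metric space, transfers to the present Polish setting; since regularity of $(\e_\pi, \scr{D}(\e_\pi))$ is imposed directly and the associated Hunt process is supplied by the hypotheses on $Y$, no further difficulty arises and the argument runs essentially line-for-line as in the proof of Theorem \ref{main-lt}.
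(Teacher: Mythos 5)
Your proposal is correct and is essentially the argument the paper intends: the paper omits the proof of Theorem \ref{rev-v1} with the remark that it is ``almost the same as that of Theorem \ref{main-lt}'', and your route---apply the symmetric case of Theorem \ref{main-dp} (i.e.\ Corollary \ref{revm-lt}) with $\mu=\pi$ and $\xi={\bf 1}_\Omega$, identify $u_\beta=\beta^{-1}(1-\bE_x[\exp(-\beta\tau_\Omega)])$ as the weak solution, and use the pre-theorem ergodicity/transience observation for the $\beta=0$ case---is exactly that. Your observation that the exhaustion by sets of finite measure is unnecessary because $\pi$ is a probability measure, and your flag about regularity in the Polish setting, are both accurate refinements consistent with the paper's treatment.
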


%%%%%%%%%%%%%%%%%%%%%%%%%%%%%%%%%%%%%%%%%%%%%%%%%%%%%%%%
\subsection{Bounds for the exit time}\lb{exp-poinc}

In the literature, the relation of the exponential moments of the exit (hitting) time and  (local) Poincar\'e inequality is an important topic in ergodic theory (see e.g. \cite{cmf04} for Markov chains and \cite{KAM11} for symmetric Markov processes). As we have seen in Lemma \ref{int-exp}, some qualitative analysis of them is provided. Indeed, from Theorem \ref{main-exp}, we could prove Corollary \ref{up-low-b} which presents some quantitative inequalities between the exponential moments of the exit time, $\lambda_0(\Omega)$ and $\lambda_1$ as follows.

\medskip
\noindent
{\bf Proof of Corollary \ref{up-low-b}.}
We consider the upper bound first.
For any function $f\in \cN^\pi_{\Omega,1}$,
let $g=f/||f||_{2,\pi},$ then $\pi(g^2)=1$ and $\check{g}=0$ q.e. on $\Omega^c$.
So by the definition of $\lambda_0(\Omega)$, we arrive at $\e_\pi(g,g)\geq \lambda_0(\Omega)$, i.e., $\e_\pi(f,f)\geq \lambda_0(\Omega)\pi(f^2)$. Hence, for any $0<\beta<\lambda_0(\Omega)$,
$$
\inf_{f\in \cN^\pi_{\Omega,1}}\e_{\pi,-\beta}(f,f)\geq (\lambda_0(\Omega)-\beta)\inf_{f\in \cN^\pi_{\Omega,1}}\pi(f^2)\geq \lambda_0(\Omega)-\beta,
$$
where in the second inequality we used the fact $\pi(f^2)\geq \pi(f)^2=1$ for $f\in \cN^\pi_{\Omega,1}$. Thus \eqref{rep-expint} follows immediately from Theorem \ref{main-exp}.
Additionally if  $\lambda_1>0$
and $\pi(\Omega^c)>0$, \eqref{exp-lambda1} is obtained by \eqref{rep-expint} and \eqref{poin-lpoin} together.

Next we turn to the lower bound. By the definition of $\phi$, we have
\be \lb{diri-phi}
\e_\pi(\phi,\phi)=\lambda_0(\Omega)\pi(\phi^2).
\de
It is obvious that $\eqref{expint-lb}$ holds if $\pi(\phi)=0$, thus it suffices to consider the case that $\pi(\phi)\neq0$. Indeed, since $\phi/\pi(\phi)\in \cN^\pi_{\Omega,1}$,
from Theorem \ref{main-exp} we have
$$
\frac{\beta}{\bE_\pi[\exp(\beta\tau_\Omega)]-1}\leq  \frac{\e_{\pi,-\beta}(\phi,\phi)}{\pi(\phi)^2}= \frac{(\lambda_0(\Omega)-\beta)\pi(\phi^2)}{\pi(\phi)^2}
$$
for $0<\beta<\lambda_0(\Omega)$, which yields \eqref{expint-lb}.
\qed
\medskip

\begin{rem}
\begin{itemize}
\item[(1)]
We note that according to \cite[Theorem 2.2]{LLL14} with $f= 1$ and $r(t)=\exp(\beta t)$, it also has
$$
\frac{\mathbb{E}_\pi[\exp(\beta\tau_{\Omega})]-1}{\beta}=\int_{0}^{\infty}\exp(\beta t)\|P_{t/2}^\Omega\|^2_{2,\pi}\d t\leqslant \frac{1}{\lambda_0(\Omega)-\beta},
$$
where $P_{t}^\Omega$ is the semigroup of process $Y$ killed upon leaving $\Omega$.
In Corollary \ref{up-low-b}, we obtained the same upper bounds by a new proof.

\item[(2)] Since  $\bE_\pi[\exp(-\beta\tau_\Omega)]\geq1/\bE_\pi[\exp(\beta\tau_\Omega)]$ for all $\beta>0$, \eqref{rep-expint} yields that
$$
\bE_\pi[\exp(-\beta\tau_\Omega)]\geq 1-\frac{\beta}{\lambda_0(\Omega)},\q \text{for }0<\beta<\lambda_0(\Omega).
$$
In Corollary \ref{hit time} below, we will provide a more precise lower bound for the Laplace transform of the exit time by its variational formula directly.
\end{itemize}
\end{rem}

\begin{cor}\label{hit time}
Let $\Omega\subset E$ be an open set  and assume that $\lambda_0(\Omega)>0$.
Then the following statements hold.
\begin{itemize}
\item[\rm(1)] For all $\beta>0$,
	$$
	\bE_\pi[\exp(-\beta\tau_\Omega)]\geq 1-\frac{\beta}{\lambda_0(\Omega)+\beta}\q \text{and}\q \bE_\pi[\tau_\Omega]\leq 1/\lambda_0(\Omega).
   $$

\item[\rm(2)] If there exists a weak solution $\phi$ of \eqref{dirichlet-e},
then for all $\beta>0$,
\be\lb{lt-ub}
\bE_\pi[\exp(-\beta\tau_\Omega)]\leq 1-\frac{\beta\pi(\phi)^2}{(\lambda_0(\Omega)+\beta)\pi(\phi^2)} \q\text{and}\q  \bE_\pi[\tau_\Omega]\geq\frac{ \pi(\phi)^2}{\lambda_0(\Omega)\pi(\phi^2)}.
\de
\end{itemize}
\end{cor}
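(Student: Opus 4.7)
The plan is to deduce both parts directly from the variational formulas in Theorem \ref{rev-v1}, using the variational characterization of $\lambda_0(\Omega)$ for the lower/upper bounds and the specific test function $\phi$ coming from the Dirichlet problem \eqref{dirichlet-e} for the reverse direction.

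First, for part (1), I would start from the identity
$$
\frac{\beta}{1-\bE_\pi[\exp(-\beta\tau_{\Omega})]}=\inf_{f\in\cN^\pi_{\Omega,1}}\e_{\pi,\beta}(f,f)
$$
and estimate the right-hand side from below for arbitrary $f\in\cN^\pi_{\Omega,1}$. Since such an $f$ satisfies $\check f=0$ q.e.\ on $\Omega^{c}$, the definition \eqref{di-ei} of $\lambda_0(\Omega)$ yields $\e_\pi(f,f)\ge\lambda_0(\Omega)\pi(f^2)$, and combining this with Cauchy--Schwarz in the form $\pi(f^2)\ge\pi(f)^{2}=1$ gives $\e_{\pi,\beta}(f,f)\ge(\lambda_0(\Omega)+\beta)\pi(f^2)\ge\lambda_0(\Omega)+\beta$. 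Rearranging produces the first bound, and the analogous chain applied to the mean-exit-time formula $1/\bE_\pi[\tau_\Omega]=\inf_{f\in\cN^\pi_{\Omega,1}}\e_\pi(f,f)$ (whose validity is ensured by the transience inherited from $\lambda_0(\Omega)>0$) gives $\bE_\pi[\tau_\Omega]\le 1/\lambda_0(\Omega)$.

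For part (2), I would use $\phi$ as a test function in the variational formulas. From the weak formulation of \eqref{dirichlet-e} together with $\phi\in\scr{D}(\e_\pi)$ and $\check\phi=0$ q.e.\ on $\Omega^{c}$, taking $f=\phi$ in the defining identity gives the key energy identity $\e_\pi(\phi,\phi)=\lambda_0(\Omega)\pi(\phi^2)$ (this is \eqref{diri-phi}). Assuming $\pi(\phi)\neq 0$ (the bound is trivial otherwise), the function $\phi/\pi(\phi)$ lies in $\cN^\pi_{\Omega,1}$, so plugging it into the Laplace-transform variational formula gives
$$
\frac{\beta}{1-\bE_\pi[\exp(-\beta\tau_{\Omega})]}\;\le\;\e_{\pi,\beta}\!\left(\frac{\phi}{\pi(\phi)},\frac{\phi}{\pi(\phi)}\right)=\frac{(\lambda_0(\Omega)+\beta)\,\pi(\phi^2)}{\pi(\phi)^2},
$$
which, after a short rearrangement, yields the asserted upper bound for $\bE_\pi[\exp(-\beta\tau_\Omega)]$. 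Applying the same substitution to the mean-exit-time formula produces the lower bound $\bE_\pi[\tau_\Omega]\ge\pi(\phi)^2/(\lambda_0(\Omega)\pi(\phi^2))$.

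No serious obstacle is expected: every ingredient is already in place from Theorem \ref{rev-v1} and the identity \eqref{diri-phi}. The only subtle point is the degenerate case $\pi(\phi)=0$ in part (2), which must be briefly disposed of since the stated inequality is vacuous there, and one small verification that the mean-exit-time variational formula is available under $\lambda_0(\Omega)>0$ (transience of $(\e_\pi^\Omega,\scr{D}(\e_\pi^\Omega))$ follows from the local Poincar\'e inequality). Everything else is a routine algebraic rearrangement.
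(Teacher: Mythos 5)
Your argument is correct, and for the Laplace-transform bounds in both parts it coincides with the paper's proof: bound $\inf_{f\in\cN^\pi_{\Omega,1}}\e_{\pi,\beta}(f,f)$ from below by $(\lambda_0(\Omega)+\beta)\pi(f^2)\ge\lambda_0(\Omega)+\beta$, and from above by plugging in $\phi/\pi(\phi)$ together with the energy identity $\e_\pi(\phi,\phi)=\lambda_0(\Omega)\pi(\phi^2)$, treating $\pi(\phi)=0$ as trivial. Where you diverge is the mean exit time: you invoke the second formula of Theorem \ref{rev-v1}, $1/\bE_\pi[\tau_\Omega]=\inf_{f\in\cN^\pi_{\Omega,1}}\e_\pi(f,f)$, directly, whereas the paper never uses it here; instead it passes to the limit $\beta\to0$ in the Laplace-transform inequalities, using dominated convergence and the elementary facts $(1-\exp(-\beta\tau_\Omega))/\beta\le\tau_\Omega$ and $(1-\exp(-\beta\tau_\Omega))/\beta\to\tau_\Omega$ to get $\bE_\pi[\tau_\Omega]=\lim_{\beta\to0}(1-\bE_\pi[\exp(-\beta\tau_\Omega)])/\beta$. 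Your route is legitimate, but note that Theorem \ref{rev-v1} states its mean-exit-time formula under the hypothesis $\pi(\Omega^c)>0$, which is not assumed in the corollary ($\lambda_0(\Omega)>0$ does not force $\pi(\Omega^c)>0$, e.g.\ when $\Omega^c$ has zero measure but positive capacity); so your parenthetical justification that transience of $(\e_\pi^\Omega,\scr{D}(\e_\pi^\Omega))$ already follows from the local Poincar\'e inequality (via the $L^2$-decay $\|P^\Omega_t\|_{2,\pi}\le e^{-\lambda_0(\Omega)t}$, hence $\bE_\pi[\tau_\Omega]<\infty$) is not optional but an essential step, and it amounts to re-proving the relevant part of Theorem \ref{rev-v1} under a weaker hypothesis. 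What your approach buys is a self-contained derivation of both mean-exit-time bounds without any limiting argument; what the paper's approach buys is that it needs nothing beyond the Laplace-transform formula as stated, so no extra transience discussion is required.
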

\begin{proof}
(1) Applying the similar arguments in the proof of Corollary \ref{up-low-b} to Theorem \ref{rev-v1},
 we get that
\begin{equation}\label{vari-exp}
\frac{\beta}{1-\bE_\pi[\exp(-\beta\tau_{\Omega})]}=\inf_{f\in\cN^\pi_{\Omega,1}}\left[\left(\frac{\e_{\pi}(f,f)}{\pi(f^2)}+\beta\right)\pi(f^2)\right]\geq\lambda_0(\Omega)+\beta.
\end{equation}

We turn to the mean exit time.
Observe that
$$
\frac{1-\exp(-\beta\tau_{\Omega})}{\beta}\leq\tau_{\Omega}\q \text{ and }\q \lim_{\beta\rightarrow0}\frac{1-\exp(-\beta\tau_{\Omega})}{\beta}=\tau_{\Omega},
$$
thus it follows from the dominated convergence theorem that
\be\lb{expint-E}
\bE_\pi[\tau_{\Omega}]=\lim_{\beta\rightarrow0}\frac{1-\bE_\pi[\exp(-\beta\tau_{\Omega})]}{\beta}.
\de
%Since $$\e_{\pi,-\lambda}(\frac{1-v_{-\lambda}}{\lambda},f)=\pi(f),$$
%then by dominated convergence theorem, $w$ solves \eqref{rev-p} with $\xi=-1$ and $\lambda=0.$
Hence, by letting $\beta\rightarrow0$ in \eqref{vari-exp}, we obtain that
$\bE_\pi[\tau_{\Omega}]\leq1/{\lambda_0(\Omega)}.$
%Furthermore, $\bE_\pi\tau_{\Omega}\leq {1}/{\lambda_1\pi(\Omega^c)}$ by using the fact that $\lambda_0(\Omega)\geq\lambda_1\pi(\Omega^c)$ again.

(2) Suppose that there exists a weak solution $\phi$ of \eqref{dirichlet-e} so that  \eqref{diri-phi} holds.
We only need to consider the case that $\pi(\phi)\neq0$ since \eqref{lt-ub} is obvious if $\pi(\phi)=0$. Combining Theorem \ref{rev-v1} with $\phi/\pi(\phi)\in \cN^\pi_{\Omega,1}$ and \eqref{diri-phi},
$$
\frac{\beta}{1-\bE_\pi[\exp(-\beta\tau_\Omega)]}\leq \frac{\e_{\pi,\beta}(\phi,\phi)}{\pi(\phi)^2}=\frac{(\lambda_0(\Omega)+\beta)\pi(\phi^2)}{\pi(\phi)^2}
$$
for all $\beta>0$. By similar arguments in (1) for the mean exit time, the proof is complete.
\end{proof}

\begin{rem}
\begin{itemize}
\item[(1)] If $\lambda_1>0$
and $\pi(\Omega^c)>0$, then Corollary \ref{hit time}, together with \eqref{poin-lpoin} implies that
$$
	\bE_\pi[\exp(-\beta\tau_\Omega)]\geq 1-\frac{\beta}{\lambda_1\pi(\Omega^c)+\beta}\q \text{and}\q \bE_\pi[\tau_{\Omega}]\leq\frac{1}{\lambda_1\pi(\Omega^c)}.
$$

\item[(2)]  When $\lambda_0(\Omega)>1$, by combining Corollaries \ref{up-low-b} and \ref{hit time}, we could obtain an interesting inequality
$$
\sum_{n=0}^\infty\frac{\bE_\pi[\tau^{2n+1}_\Omega]}{(2n+1)!}\leq \frac{\lambda_0(\Omega)}{(\lambda_0(\Omega)-1)(\lambda_0(\Omega)+1)},
$$
since
$
2\sum_{n=0}^\infty\frac{\bE_\pi[\tau^{2n+1}_\Omega]}{(2n+1)!}=\bE_\pi[\exp(\tau_\Omega)]-\bE_\pi[\exp(-\tau_\Omega)].
$
\end{itemize}
\end{rem}
\medskip

Next, from Corollaries \ref{up-low-b} and \ref{hit time}, we will present some estimates of the exit time by Lyapunov conditions.

\begin{cor}\lb{lyap}
Under the conditions given in Corollary \ref{up-low-b} and assume that there exists a Lyapunov function $\varphi$ which is locally bounded below satisfying $\varphi= 0$  on $\Omega^c$, $\varphi|_{\Omega}> 0$ and
$$
\delta:=-\sup_{\Omega}\frac{L\varphi}{\varphi}>0.
$$
We also assume that the eigenfunction of $L$ corresponding to $\lambda_0(\Omega)$ is locally bounded above. Then

$$
\aligned
&\bE_\pi[\tau_{\Omega}]\leq  1/\delta,\q \bE_\pi[\exp(\beta\tau_{\Omega})]\leq  1+\frac{\beta}{\delta-\beta}\q \text{for }0<\beta<\delta,\\
\endaligned
$$
and
$$
\bE_\pi[\exp(-\beta\tau_{\Omega})]\geq 1-\frac{\beta}{\delta+\beta} \q \text{for }\beta>0.
$$
\end{cor}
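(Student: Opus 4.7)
The plan is to reduce all three bounds to the single principal-eigenvalue inequality
\begin{equation*}
\lambda_0(\Omega) \;\geq\; \delta,
\end{equation*}
after which Corollaries \ref{up-low-b} and \ref{hit time} do the rest. Once this is in hand, each of the three claims of Corollary \ref{lyap} follows by monotonicity applied to the already-proved bounds: $\beta/(\lambda_0(\Omega) - \beta)$ in \eqref{rep-expint}, and $1/\lambda_0(\Omega)$ and $\beta/(\lambda_0(\Omega) + \beta)$ in Corollary \ref{hit time}(1), are all non-increasing in $\lambda_0(\Omega)$ (with $\beta < \delta \leq \lambda_0(\Omega)$ in the first case keeping the bound in its valid range), so replacing $\lambda_0(\Omega)$ by the smaller $\delta$ only worsens them to the forms stated in Corollary \ref{lyap}.

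Thus the heart of the proof is to show $\lambda_0(\Omega) \geq \delta$. Let $\psi$ be the eigenfunction of $L$ corresponding to $\lambda_0(\Omega)$, i.e.\ $\psi \in \scr{D}(\e_\pi)$, $\check{\psi} = 0$ q.e.\ on $\Omega^c$ and $-L\psi = \lambda_0(\Omega)\, \psi$ weakly on $\Omega$. By the standard ground-state argument $\psi$ may be chosen strictly positive on $\Omega$, and it is locally bounded above there by hypothesis. Combined with $\varphi > 0$ locally bounded below, this forces $\int_\Omega \psi \varphi \, d\pi > 0$ (the integral over any compact $K \subset \Omega$ is already strictly positive since both factors are bounded away from $0$ on $K$). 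Using the self-adjointness of $L$ together with $\psi \geq 0$ and the Lyapunov bound $-L\varphi \geq \delta \varphi$ on $\Omega$, the key chain reads
\begin{equation*}
\lambda_0(\Omega) \int_\Omega \psi\, \varphi \, d\pi \;=\; \int_\Omega (-L\psi)\, \varphi \, d\pi \;=\; \int_\Omega \psi\, (-L\varphi) \, d\pi \;\geq\; \delta \int_\Omega \psi\, \varphi \, d\pi,
\end{equation*}
and dividing through by $\int_\Omega \psi \varphi \, d\pi > 0$ gives $\lambda_0(\Omega) \geq \delta$.

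The main obstacle is justifying the middle (symmetry) equality above, since $\varphi$ is only prescribed through a pointwise Lyapunov inequality and need not belong to $\scr{D}(\e_\pi)$ or even to $L^2(\pi)$. The remedy is a truncation argument: pick an increasing sequence of cutoffs $\chi_n \uparrow 1_\Omega$ each compactly supported in $\Omega$, so that $\psi \chi_n \in \scr{D}(\e_\pi)$ with compact support inside $\Omega$. On the support of $\chi_n$ the function $\psi$ is bounded above and $\varphi$ is bounded away from $0$ by the local-boundedness hypotheses, so the symmetry identity $\lan -L\psi, \varphi \chi_n\ran_{2,\pi} = \lan \psi \chi_n, -L\varphi\ran_{2,\pi}$ reduces to a standard computation on a compact set. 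Monotone convergence (the integrands are non-negative) together with the $\e_\pi$-convergence $\psi \chi_n \to \psi$ then lifts the identity to the pair $(\psi, \varphi)$ and closes the argument.
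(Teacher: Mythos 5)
Your reduction is exactly the paper's: the published proof consists of the single observation that $\lambda_0(\Omega)\geq\delta$, quoted from \cite[Theorem 3.2]{cmf00}, followed by plugging this into Corollaries \ref{up-low-b} and \ref{hit time}, whose bounds are monotone in $\lambda_0(\Omega)$ (with $0<\beta<\delta\leq\lambda_0(\Omega)$ keeping \eqref{rep-expint} in range). That monotonicity step of yours is correct and is all the paper does beyond the citation. The difference is that you try to prove $\lambda_0(\Omega)\geq\delta$ from scratch, and that argument, as written, has a genuine gap at its technical heart.

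The problem is the middle ``symmetry'' equality and the truncation meant to justify it. For fixed $n$, the identity $\lan -L\psi,\varphi\chi_n\ran_{2,\pi}=\lan \psi\chi_n,-L\varphi\ran_{2,\pi}$ is \emph{not} an instance of self-adjointness: self-adjointness would pair $\psi$ with $\varphi\chi_n$ on both sides, and inserting the cutoff on different factors produces uncontrolled commutator terms (for a diffusion, terms like $\int(\varphi\,\Gamma(\psi,\chi_n)-\psi\,\Gamma(\varphi,\chi_n))\,\d\pi$; for forms with a jump part, multiplying by a cutoff does not even localize, so ``a standard computation on a compact set'' is unavailable in the paper's abstract setting). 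Nothing in the proposal shows these terms vanish as $n\to\infty$. Moreover $\varphi$ is given only through a pointwise Lyapunov inequality: it need not belong to $\scr{D}(\e_\pi)$, to the $L^2$-domain of $L$, or even to $L^2(E,\pi)$, so the quantity $\lan\psi\chi_n,-L\varphi\ran_{2,\pi}$ has no a priori meaning without an extended-generator/Dynkin-type justification; and the final division requires $\int_\Omega\psi\varphi\,\d\pi<\infty$, which is never established (if it is infinite the chain of inequalities is vacuous). A smaller point: strict positivity of $\psi$ on $\Omega$ needs irreducibility of the killed process, which is not assumed; you only need $\psi\geq0$, $\psi\not\equiv0$ (take $|\psi|$), together with $\varphi>0$ on $\Omega$, to get $\int_\Omega\psi\varphi\,\d\pi>0$. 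The standard rigorous route to $\lambda_0(\Omega)\geq\delta$ — in substance what the cited theorem supplies under exactly the local boundedness hypotheses of the corollary — avoids the eigenfunction pairing altogether, e.g.\ by testing against $f^2/\varphi$ for $f$ compactly supported in $\Omega$ and using an inequality of the type $\e_\pi(f,f)\geq\int_\Omega\frac{-L\varphi}{\varphi}f^2\,\d\pi\geq\delta\,\pi(f^2)$; as it stands, your self-contained replacement for the citation is incomplete.
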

\begin{proof}
 Note that by \cite[Theorem 3.2]{cmf00}, $\lambda_0(\Omega)\geq \delta$, thus the conclusion is obtained by Corollaries \ref{up-low-b} and  \ref{hit time}.
\end{proof}

%%%%%%%%%%%%%%%%%%%%%%%%%%%%%%%%%%%%%%%%%%%%%%%%%%%%%%%%%%%%%%%%%%%%%%%
%%%%%%%%%%%%%%%%%%%%%%%%%%%%%%%%%%%%%%%%%%%%%%%%%%%%%%%%%%%%%%%%%%%%%%%
\subsection{An example of reversible diffusion processes on manifold}\lb{exm-sym}
%In this subsection, we give a concrete example to illustrate the applications of the main results presented in the previous subsections.

Let $M$ be a $d$-dimensional connected complete Riemannian manifold  with Riemannian metric $\rho$ and Riemannian volume $\d x$.
Consider an operator $L=\Delta+ \nabla V\cdot\nabla$
where $V\in C^2(M)$, and let $Y$ be an ergodic diffusion process on $M$ with the generator $L$ and the stationary distribution $\pi(\d x) =\exp{(V (x))}\d x/\int_M \exp{(V (x))}\d x$.
For fixed $o\in M$, let $\rho(x)$ be the Riemannian distance function from $o$, and let $\mathrm{cut}(o)$ be the cut-locus. Assume that either $\partial M$ is bounded or $M$ is convex.
In this case, the associated Dirichlet form is given by
$$
\e_{\pi}(f,g)=\int_M\nabla f\cdot\nabla g\d\pi\quad \text{and}\quad \scr{D}(\e_\pi)=C^1(M)\cap L^2(M,\pi).
$$

Applying Corollaries \ref{up-low-b} and \ref{hit time} with the variational formula of $\lambda_0(\cdot)$ in \cite[Theorem 1.2]{wfy99}, some estimates of the exit time are obtained as follows.

\begin{cor}
Fix $r>0$ and let
$\Omega=B_r^c:=\{x\in M:\rho(x)>r\}$
%% $\Omega=B_r^c\subset M$
  with $\pi(\Omega^c)>0$ and $\partial \Omega\neq \emptyset$.
%and the first Dirichlet eigenvalue $\lambda_0(\Omega)>0$.
Denote
$$\gamma(r)=\sup_{\rho(x)=r,x\notin \mathrm{cut}(o)}L\rho(x)\q\text{and}\q C(r)=\int_{1}^{r}\gamma(s)\d s.$$
Assume that
\begin{align}\label{d:del}
\delta_r:=\sup_{t\geq r}\int_{r}^{t}\exp{(-C(l))\d l}\int^{\infty}_{ t}\exp{(C(s))}\d s<\infty.
\end{align}
Then $ \bE_\pi[\tau_{\Omega}]\leq 4\delta_r$,
\begin{align*}
\bE_\pi[\exp(\beta\tau_{\Omega})]\le1+\frac{4\beta\delta_r}{1-4\beta\delta_r}\q \text{for }0<\beta< 1/(4\delta_r)
\end{align*}
and
$$
\bE_\pi[\exp(-\beta\tau_{\Omega})]\geq 1-\frac{4\beta\delta_r}{1+4\beta\delta_r}\q \text{for }\beta>0.
$$
\end{cor}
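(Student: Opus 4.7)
The plan is to reduce the entire statement to a quantitative lower bound on $\lambda_0(B_r^c)$, and then mechanically invoke Corollaries \ref{up-low-b} and \ref{hit time}. The key input will be \cite[Theorem 1.2]{wfy99}, which, under precisely the hypothesis $\delta_r<\infty$ together with the geometric assumptions on $(M,\rho)$ already in force (complete, $\partial M$ bounded or $M$ convex, with $V\in C^2(M)$), yields the Hardy-type lower bound
\begin{equation*}
\lambda_0(B_r^c)\;\ge\;\frac{1}{4\delta_r}.
\end{equation*}
This single inequality does all the work: it guarantees $\lambda_0(\Omega)>0$, so that Corollaries \ref{up-low-b} and \ref{hit time} both apply.

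Once this is in hand, the mean exit time bound is immediate from Corollary \ref{hit time}(1), namely $\bE_\pi[\tau_\Omega]\le 1/\lambda_0(\Omega)\le 4\delta_r$. For the exponential upper bound, I would apply Corollary \ref{up-low-b}(1) to get $\bE_\pi[\exp(\beta\tau_\Omega)]\le 1+\beta/(\lambda_0(\Omega)-\beta)$ on $0<\beta<\lambda_0(\Omega)$, then observe that $\lambda\mapsto \beta/(\lambda-\beta)$ is strictly decreasing on $(\beta,\infty)$, so that restricting to the smaller range $0<\beta<1/(4\delta_r)\le\lambda_0(\Omega)$ and plugging in $\lambda=1/(4\delta_r)$ yields exactly $1+4\beta\delta_r/(1-4\beta\delta_r)$. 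The Laplace-transform lower bound is entirely analogous: Corollary \ref{hit time}(1) gives $\bE_\pi[\exp(-\beta\tau_\Omega)]\ge 1-\beta/(\lambda_0(\Omega)+\beta)$, and since $\lambda\mapsto \beta/(\lambda+\beta)$ is decreasing on $(0,\infty)$, inserting $\lambda=1/(4\delta_r)$ delivers $1-4\beta\delta_r/(1+4\beta\delta_r)$.

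The only genuinely non-routine part is the appeal to Wang's variational formula; everything else is monotone rewriting of the $\lambda_0(\Omega)$-bounds proved earlier in the paper. I do not foresee any real obstacle: the range conditions ($\beta<1/(4\delta_r)$ in the upper exponential bound, $\beta>0$ elsewhere) match automatically, and the quasi-everywhere boundary condition $\varphi=0$ on $\Omega^c$ underlying $\lambda_0(\Omega)$ is exactly the Dirichlet condition at $\partial B_r$ handled in \cite{wfy99}. So the proof will be short: cite \cite[Theorem 1.2]{wfy99} to get $\lambda_0(B_r^c)\ge 1/(4\delta_r)$, then substitute into the three bounds supplied by Corollaries \ref{up-low-b} and \ref{hit time}.
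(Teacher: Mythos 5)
Your overall route is the same as the paper's: get a lower bound $\lambda_0(B_r^c)\ge 1/(4\delta_r)$ and then substitute it, via the obvious monotonicity in $\lambda_0(\Omega)$, into Corollary \ref{up-low-b}(1) and Corollary \ref{hit time}(1); that second half of your argument is correct and essentially identical to what the paper does. The gap is in the first half: you attribute the inequality $\lambda_0(B_r^c)\ge 1/(4\delta_r)$ directly to \cite[Theorem 1.2]{wfy99}, but that theorem (as it is used here) only supplies the test-function variational bound
$$
\lambda_0(\Omega)\;\geq\; \inf_{t\geq r} f(t)\left(\int_{r}^{t}\exp(-C(l))\,\d l\int_{l}^{\infty}\exp(C(s))f(s)\,\d s\right)^{-1}
$$
valid for every positive $f\in C[r,\infty)$. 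Extracting the quantitative constant $1/(4\delta_r)$ from this is precisely the non-trivial content of the proof, and your proposal omits it entirely: you must choose a specific $f$ and estimate the double integral in terms of $\delta_r$.

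Concretely, the missing computation is the Hardy-type argument: set $\varphi(t)=\int_{r}^{t}\exp(-C(l))\,\d l$ and take $f=\sqrt{\varphi}$. Using the defining property of $\delta_r$, namely $\varphi(t)\int_{t}^{\infty}\exp(C(s))\,\d s\le\delta_r$ for all $t\ge r$, an integration by parts gives
$$
\int_{l}^{\infty}\exp(C(s))\sqrt{\varphi(s)}\,\d s\;\le\;\frac{\delta_r}{\sqrt{\varphi(l)}}+\frac{\delta_r}{2}\int_{l}^{\infty}\frac{\varphi'(s)}{\varphi(s)^{3/2}}\,\d s\;\le\;\frac{2\delta_r}{\sqrt{\varphi(l)}},
$$
and then
$$
\int_{r}^{t}\exp(-C(l))\,\d l\int_{l}^{\infty}\exp(C(s))f(s)\,\d s\;\le\;2\delta_r\int_{r}^{t}\frac{\varphi'(l)}{\sqrt{\varphi(l)}}\,\d l\;=\;4\delta_r\sqrt{\varphi(t)}\;=\;4\delta_r f(t),
$$
so the variational bound yields $\lambda_0(B_r^c)\ge 1/(4\delta_r)>0$. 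With this step supplied, the remainder of your proposal (the three substitutions, including the decrease of $\lambda\mapsto\beta/(\lambda-\beta)$ and $\lambda\mapsto\beta/(\lambda+\beta)$) goes through exactly as you describe and matches the paper. If you can instead locate a published statement of the $4\delta$ bound for the exterior-ball Dirichlet eigenvalue in this weighted-manifold setting, citing it would also close the gap, but the bare citation of \cite[Theorem 1.2]{wfy99} does not.
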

\begin{proof}

First, by \cite[Theorem 1.2]{wfy99}, for any positive  function $f\in C[r,\infty)$ we have
\be\lb{diff-expinq}
\lambda_0(\Omega)\geq \inf_{t\geq r}f(t)\left(\int_{r}^{t}\exp{(-C(l))\d l}\int_{l}^{\infty}\exp{(C(s))}f(s)\d s\right)^{-1}.
\de
Denote $\varphi(t)=\int_{r}^{t}\exp{(-C(l))}\d l$ and take $f(t)=\sqrt{\varphi(t)}$ in \eqref{diff-expinq}.
From the integration by parts, for $l\geq r$ we see that
\begin{equation*}
\begin{split}
\int_{l}^{\infty}\exp{(C(s))}f(s)\d s &=-\int_{l}^{\infty}\sqrt{\varphi(s)}\d \left(\int_{s}^{\infty}\exp{(C(t))}\d t\right)\\
&\leq  \frac{\delta_r}{\sqrt{\varphi(l)}}+\frac{\delta_r}{2}\int_{l}^{\infty}\frac{\varphi'(s)}{\varphi^{3/2}(s)}\d s,
\end{split}
\end{equation*}
which implies that
%%%LJ I change "That is," to above
 $$\int_{l}^{\infty}\exp{(C(s))}f(s)\d s \leq\frac{2\delta_r}{\sqrt{\varphi(l)}}\q \text{for all }l\geq r.$$
 Hence,
 $$\int_{r}^{t}\exp{(-C(l))\d l}\int^{\infty}_{ l}\exp{(C(s))}f(s)\d s\leq2\delta_r\int_{r}^{t}\frac{\exp{(-C(l))}}{\sqrt{\varphi(l)}}\d l{\le } 4\delta_r\sqrt{\varphi(t)}.$$
 Combining this with \eqref{d:del}--\eqref{diff-expinq}, we obtain
\be\lb{lamb-Br}\lambda_0(\Omega)\geq 1/(4\delta_r)>0.\de
So plugging \eqref{lamb-Br} into \eqref{rep-expint} yields that
$$
\bE_\pi[\exp(\beta\tau_{\Omega})]\le1+\frac{4\beta\delta_r}{1-4\beta\delta_r}\q \text{for }0<\beta< 1/(4\delta_r).
$$
The rest of assertions can be obtained by applying \eqref{lamb-Br} to Corollary \ref{hit time}.
\end{proof}

\section*{Appendix. A simple property of ``q.e.''}
Let $\Omega\subset E$ be an open set, and assume that $f=0$ and $g=0$ q.e. on $\Omega^c$. In the following, we claim that $f\pm g=0$ q.e. on $\Omega^c$.

Indeed, since $f=0$ and $g=0$ q.e. on $\Omega^c$, there exist $N_1, N_2\subset \Omega^c$, with  $\mathrm{Cap}^{\alpha}(N_1)=\mathrm{Cap}^{\alpha}(N_2)=0$, such that
$f=0$ on $\Omega^c\setminus N_1$ and
$g=0$ on $\Omega^c\setminus N_2$. Hence, we obtain that
$$
f\pm g=0\quad \text{on}\quad \Omega^c\setminus( N_1\cup N_2).
$$
We now prove that $\mathrm{Cap}^{\alpha}(N_1\cup N_2)=0$, therefore the claim holds. By the definition of capacity (see e.g. \cite[(2.1.9)]{O13}), there exist sequences of open sets $(D_{i,n})_{n\geq1}$, $i=1,2$ , such that $N_i\subset D_{i,n}$ and $$\mathrm{Cap}^{\alpha}(D_{i,n})\downarrow \mathrm{Cap}^{\alpha}(N_i).$$
Set $D_n=D_{1,n}\cup D_{2,n}$, then $ N_1\cup N_2\subset D_n$. It follows from \cite[Lemma 2.1.2]{O13} that
$$\mathrm{Cap}^{\alpha}(D_{n})\leqslant\mathrm{Cap}^{\alpha}(D_{1,n})+\mathrm{Cap}^{\alpha}(D_{2,n}),$$
which implies that $\mathrm{Cap}^{\alpha}(N_1\cup N_2)=0$ by the definition of capacity again.

\vskip 0.3truein
%----------------------------------------------------------------------------------------------
%----------------------------------------------------------------------------------------------
{\bf Acknowledgement}\
The authors would like to thank the reviewers for corrections and helpful comments; and to Professors C.-R. Hwang and J. Wang for their suggestions which permitted to improve the presentation of the article.
Lu-Jing Huang acknowledges support from NSFC (No. 11901096),
NSF-Fujian(No. 2020J05036), the Program for Probability and Statistics: Theory and
Application (No. IRTL1704), and the Program for Innovative Research Team in Science
and Technology in Fujian Province University (IRTSTFJ).
Kyung-Youn Kim thanks Professor C.-R. Hwang for the supervision(MOST 108-2115-M-004-004-), and Institute of Mathematics, Academia Sinica,  Taiwan for the support. 
Yong-Hua Mao and Tao Wang acknowledge support from NSFC (No. 11771047).

%%%%%%%%%%%%%%%%%%%%%%%%%%%%%%%%%%%%%%%%%%%%%%%%%%%%%%%%%%%%%%
%\newpage
%\begin{thebibliography}{00}

%\end{thebibliography}
\bibliographystyle{plain}
\bibliography{jump_drift}

\begin{thebibliography}{10}

\bibitem{AF02}
D.-J. Aldous and J.-A. Fill.
\newblock {\em {Reversible Markov Chains and Random Walks on Graphs}}.
\newblock 2002.
\newblock URL www.berkeley.edu/users/aldous/book.html.

\bibitem{APP05}
L.~Alili, P.~Patie, and J.~L. Pedersen.
\newblock {Representations of the first hitting time density of an
  Ornstein-Uhlenbeck process}.
\newblock {\em Stoch. Models}, 21(4):967--980, 2005.

\bibitem{AAP04}
S.~Asmussen, F.~Avram, and M.~R. Pistorius.
\newblock {Russian and American put options under exponential phase-type L\'evy
  models}.
\newblock {\em Stochastic Process. Appl.}, 109(1):79--111, 2004.

\bibitem{BGL07}
I.~Benjamini, O.~Gurel-Gurevich, and R.~Lyons.
\newblock {Recurrence of random walk traces}.
\newblock {\em Ann. Probab.}, 35(2):732--738, 2007.

\bibitem{BS02}
A.~N. Borodin and P.~Salminen.
\newblock {\em {Handbook of Brownian Motion-Facts and Formulae}}.
\newblock Second ed., in: Probability and its Applications, Birkhäuser Verlag,
  Basel, 2002.

\bibitem{CGZ13}
P.~Cattiaux, A.~Guillin, and P.-A. Zitt.
\newblock {Poincar\'e inequalities and hitting times}.
\newblock {\em Ann. Inst. Henri Poincar\'e Probab. Stat.}, 49(1):95--118, 2013.

\bibitem{Ch96}
M.-F. Chen.
\newblock {Estimation of spectral gap for Markov chains}.
\newblock {\em Acta Math. Sin. New Ser.}, 12(4):337--360, 1996.

\bibitem{cmf05}
M.-F. Chen.
\newblock {\em {Eigenvalues, Inequalities, and Ergodic Theory}}.
\newblock Springer-Verlag London, Ltd., London, 2004.

\bibitem{cmf04}
M.-F. Chen.
\newblock {\em {From Markov Chains to Non-Equilibrium Particle Systems}}.
\newblock second ed., World Scientific Publishing Co., Inc., River Edge, NJ,
  2004.

\bibitem{CW97}
M.-F. Chen and F.-Y. Wang.
\newblock {Estimation of spectral gap for elliptic operators}.
\newblock {\em Trans. Amer. Math. Soc.}, 349(3):1239--1267, 1997.

\bibitem{cmf00}
M.-F. Chen and F.-Y. Wang.
\newblock {Cheeger's inequalities for general symmetric forms and existence
  criteria for spectral gap}.
\newblock {\em Ann. Probab.}, 28(1):235--257, 2000.

\bibitem{Di07}
S.~Ditlevsen.
\newblock {A result on the first-passage time of an Ornstein-Uhlenbeck
  process}.
\newblock {\em Statist. Probab. Lett.}, 77:1744--1749, 2007.

\bibitem{Do94}
P.-G. Doyle.
\newblock Energy for markov chains.
\newblock 1994.
\newblock http://www.math.dartmouth.edu/doyl e.

\bibitem{DLM17}
E.~B. Dryden, J.~J. Langford, and P.~McDonald.
\newblock {Exit time moments an eigenvalue estimates}.
\newblock {\em Bull. Lond. Math. Soc.}, 49:480--490, 2017.

\bibitem{Fr73}
A.~Friedman.
\newblock The asymptotic behavior of the first real eigenvalue of a second
  order elliptic operator with a small parameter in the highest derivatives.
\newblock {\em Indiana Univ. Math. J.}, 22(10):1005--1015, 1973.

\bibitem{Fu80}
M.~Fukushima.
\newblock {\em {Dirichlet Forms and Markov Processes}}.
\newblock Amsterdam-Oxford-New York: North Holland, 1980.

\bibitem{GL14}
A.~Gaudilli\`{e}re and C.~Landim.
\newblock {A Dirichlet principle for non reversible {Markov} chains and some
  recurrence theorems}.
\newblock {\em Probab. Theory Relat. Fields}, 158:55--89, 2014.

\bibitem{GL82}
D.~Griffeath and T.-M. Liggett.
\newblock {Critical phenomena for spitzer's reversible nearest particle
  systems}.
\newblock {\em Ann. Probab.}, 10(4):881--895, 1982.

\bibitem{HKM20+}
L.-J. Huang, K.-Y. Kim, and Y.-H. Mao.
\newblock {Variational principles for the exit time of non-symmetric
  diffusions}.
\newblock Preprint.

\bibitem{HM18}
L.-J. Huang and Y.-H. Mao.
\newblock Variational principles of hitting times for non-reversible {Markov}
  chains.
\newblock {\em J. Math. Anal. Appl.}, 468(2):959--975, 2018.

\bibitem{IW89}
N.~Ikeda and S.~Watanabe.
\newblock {\em {Stochastic Differential Equations and Diffusion Processes}}.
\newblock North-Holland Publishing Co., Amsterdam; Kodansha, Ltd., Tokyo, 1989.

\bibitem{KSK76}
J.-G. Kemeny, J.-L. Snell, and A.-W. Knapp.
\newblock {\em Denumerable Markov Chains}.
\newblock Springer-Verlag, New York-Heidelberg-Berlin, 1976.

\bibitem{KM99}
K.-K.-J. Kinateder and P.~McDonald.
\newblock Variational principles for average exit time moments for diffusions
  in {Euclidean} space.
\newblock {\em Proc. Amer. Math. Soc.}, 127:2767--2772, 1999.

\bibitem{KMM98}
K.-K.-J. Kinateder, P.~McDonald, and D.~Miller.
\newblock {Exit time moments, boundary value problems, and the geometry of
  domains in Euclidean space}.
\newblock {\em Probab. Theory Relat. Fields}, 111:469--487, 1998.

\bibitem{KAM11}
A.~M. Kulik.
\newblock {Poincar\'{e} inequality and exponential integrability of the hitting
  times of a Markov process}.
\newblock {\em Theory Stoch. Process.}, 17(2):71--80, 2011.

\bibitem{LMS19}
C.~Landim, M.~Mariani, and I.~Seo.
\newblock Dirichlet's and thomson's principles for non-selfadjoint elliptic
  operators with application to non-reversible metastable diffusion processes.
\newblock {\em Arch. Ration. Mech. Anal.}, 231(2):887--938, 2019.

\bibitem{LLL14}
E.~L{\"o}cherbach, O.~Loukianov, and D.~Loukianova.
\newblock {Spectral condition, hitting times and Nash inequality}.
\newblock {\em Ann. Inst. Henri Poincar\'e Probab. Stat.}, 50(4):1213--1230,
  2014.

\bibitem{MR92}
Z.~M. Ma and M.~R{\"o}ckner.
\newblock {\em {Introduction to the Theory of (Non-Symmetric) Dirichlet
  Forms}}.
\newblock Springer-Verlag. Berlin, 1992.

\bibitem{Mao02}
Y.-H. Mao.
\newblock {Strong ergodicity for Markov processes by coupling methods}.
\newblock {\em J. Appl. Probab.}, 39:839--852, 2002.

\bibitem{Ma97}
P.~Mathieu.
\newblock Hitting times and spectral gap inequalities.
\newblock {\em Ann. Inst. Henri Poincar\'e Probab. Stat.}, 33(4):437--465,
  1997.

\bibitem{MOR14}
R.~Metzler, G.~Oshanin, and S.~Redner.
\newblock {\em {First-passage Phenomena and Their Applications}}.
\newblock World Scientific, 2014.

\bibitem{MP12}
A.~Mijatovi\'c and M.~R. Pistorius.
\newblock {On the drawdown of completely asymmetric L\'evy processes}.
\newblock {\em Stochastic Process. Appl.}, 22:3812--3836, 2012.

\bibitem{O13}
Y.~Oshima.
\newblock {\em {Semi-Dirichlet Forms and Markov Processes}}.
\newblock Walter de Gruyter \& Co., Berlin, 2013.

\bibitem{Pi95}
P.~G. Pinsky.
\newblock {\em {Positive Harmonic Functions and Diffusion}}.
\newblock Cambridge University Press, Cambridge, 1995.

\bibitem{Pi88}
R.-G. Pinsky.
\newblock {A generalized Dirichlet principle for second order nonselfadjoint
  elliptic operators}.
\newblock {\em SIAM J. Math. Anal.}, 19(1):204--213, 1988.

\bibitem{Re01}
S.~Redner.
\newblock {\em {A Guide to First-passage Processes}}.
\newblock Cambridge Univ. Press, 2001.

\bibitem{Rs88}
L.~M. Ricciardi and S.~Sato.
\newblock {First-passage time density and moments of the Ornstein-Uhlenbeck
  process}.
\newblock {\em J. Appl. Probab.}, 25:43--57, 1988.

\bibitem{Ue14}
T.~Uemura.
\newblock {On multidimensional diffusion processes with jumps}.
\newblock {\em Osaka J. Math.}, 51(4):969--992, 2014.

\bibitem{wfy99}
F.-Y. Wang.
\newblock {Existence of the spectral gap for elliptic operators}.
\newblock {\em Ark. Mat.}, 37(2):395--407, 1999.

\bibitem{wfy05}
F.-Y. Wang.
\newblock {\em {Functional Inequalities, Markov Semigroups, and Spectral
  Theory}}.
\newblock Beijing: Science press, 2005.

\end{thebibliography}

\vskip 0.3truein

{\bf Lu-Jing Huang:}
%\vskip -.1truein
College of Mathematics and Informatics, Fujian Normal University, Fuzhou, 350007, P.R. China. E-mail: \texttt{huanglj@fjnu.edu.cn}

\medskip
{\bf Kyung-Youn Kim:}
%\vskip -.1truein
Department of Applied Mathematics, National Chung Hsing University, Taichung, Taiwan. E-mail: \texttt{kyungyoun07@gmail.com}

\medskip
{\bf Yong-Hua Mao:}
%\vskip -.1truein
Laboratory of Mathematics and Complex Systems(Ministry of Education), School of Mathematical Sciences, Beijing Normal University, Beijing 100875, P.R. China. E-mail: \texttt{maoyh@bnu.edu.cn}

\medskip
{\bf Tao Wang:}
%\vskip -.1truein
Laboratory of Mathematics and Complex Systems(Ministry of Education), School of Mathematical Sciences, Beijing Normal University, Beijing 100875, P.R. China. E-mail: \texttt{wang\_tao@mail.bnu.edu.cn}

\end{document}